\newtheorem{theorem}{Theorem}[section]
\newtheorem{lemma}[theorem]{Lemma}
\newtheorem{proposition}[theorem]{Proposition}
\newtheorem{corollary}[theorem]{Corollary}
\theoremstyle{remark}
\newtheorem{remark}[theorem]{\bf Remark}
\def\NN{\mathds{N}}
\def\RR{\mathbb{R}}
\def\QQ{\mathbb{Q}}
\def\CC{\mathbb{C}}
\def\ZZ{\mathbb{Z}}
\def\kk{\mathds{k}}
\begin{document}
	
\def\NN{\mathbb{N}}
\def\RR{\mathds{R}}
\def\HH{I\!\! H}
\def\QQ{\mathbb{Q}}
\def\CC{\mathds{C}}
\def\ZZ{\mathbb{Z}}
\def\DD{\mathds{D}}
\def\OO{\mathcal{O}}
\def\kk{\mathds{k}}
\def\KK{\mathbb{K}}
\def\ho{\mathcal{H}_0^{\frac{h(d)}{2}}}
\def\LL{\mathbb{L}}
\def\L{\mathds{k}_2^{(2)}}
\def\M{\mathds{k}_2^{(1)}}
\def\k{\mathds{k}^{(*)}}
\def\l{\mathds{L}}

\selectlanguage{english}
\title[On the Hilbert $2$-class field towers...]{On the Hilbert $2$-class field towers of some cyclotomic $\mathbb{Z}_2$-extensions}
%premier auteur

 \author[M. M. Chems-Eddin]{Mohamed Mahmoud Chems-Eddin}
 \address{Mohamed Mahmoud CHEMS-EDDIN: Mohammed First University, Mathematics Department, Sciences Faculty, Oujda, Morocco }
 \email{2m.chemseddin@gmail.com}

 % troisième auteur
 \author[A. Zekhnini]{Abdelkader Zekhnini}
 \address{Abdelkader Zekhnini: Mohammed First University, Mathematics Department, Pluridisciplinary faculty, Nador, Morocco}
 \email{zekha1@yahoo.fr}

 \author[A. Azizi]{Abdelmalek Azizi}
 \address{Abdelmalek Azizi: Mohammed First University, Mathematics Department, Sciences Faculty, Oujda, Morocco }
 \email{abdelmalekazizi@yahoo.fr}

\subjclass[2010]{11R11, 11R23, 11R27, 11R29, 11R37.}
\keywords{Hilbert $2$-class field towers, Cyclotomic $\ZZ_2$-extensions, Multiquadratic number fields,  unit groups, $2$-class groups.}

\begin{abstract}
In this paper, we study the length of the $2$-class field towers and the structure of the Galois groups $\mathrm{Gal}(\mathcal{L}(K_n)/K_n)$ of the maximal unramified $2$-extensions   of the layers $K_n$ of the cyclotomic $\ZZ_2$-extension of some special Dirichlet fields. The capitulation problem is investigated too.
\end{abstract}

\selectlanguage{english}

\maketitle

 \section{\textbf{Introduction}}
 \label{Sec:1}

  Let $K$ be a number field and $\mathrm{Cl}_2(K)$ its $2$-class group, that is the $2$-Sylow subgroup of its class group in the wide sense.
  Let  $K^{(1)}$ denote    the Hilbert $2$-class field of $K$, that is the maximal unramified (including the infinite primes) abelian extension of $K$ whose degree is a power of $2$. For any positive integer $i\geq1$, define inductively $K^{(i)} $ as $K^{(0)}= K$ and
    $K^{(i)}=(K^{(i-1)})^{(1)}$. Then the sequence of fields
  $$K=K^{(0)} \subset K^{(1)}\subset  K^{(2)}  \subset \cdots\subset K^{(i)}\subset \cdots $$
  is called   the $2$-class field tower of $K$.
  If for all $i\geq1$,   $K^{(i)}\neq K^{(i-1)}$,  the tower is said to be infinite,  otherwise the tower is said to be  finite,  and the minimal integer $i$ such that
    $K^{(i)}= K^{(i-1)}$ is called the length of the tower.
  The determination of the length of the $2$-class field tower of $K$ and the structure of the Galois groups of the tower  are until nowadays two classical and difficult open
    problems   of class field theory.
     Actually, there is no known decision procedure to determine whether or not the $2$-class field
    of an arbitrary number field $K$ is finite.  However, it is known that if the rank of $\mathrm{Cl}_2(K^{(1)})\leq 2$, then by group theoretic  means,
    the tower is finite and its length is at most $3$ (cf. \cite[Theorem 1]{Bl}). If the $2$-class group of $K$ is of type $(2,2)$ (here $(a_1,...,a_r)$ denotes the direct sum of cyclic groups of order $a_i$, for $i=1,...,r$), then the Hilbert class field tower of such field terminates in at most two steps and the Galois groups of the tower are characterized by means of capitulation problem in the three unramified quadratic extension of $K$ (cf. \cite{kisilvsky}). If  the $2$-class group of $K$ is of type $(2,2^n)$, with $n\geq 2$, the situation becomes more difficult, and  many authors treated it using the capitulation problem  in the unramified extensions of $K$ for fields of  low degree (e.g. $2$ or $4$, cf. \cite{taous2008, benlem99,benlem06, benlemsne, bensney}). Our contribution in this paper is to determine the length of the $2$-class field tower, the structure of its Galois group and to study the capitulation problem for some families of number fields of large degree over $\mathbb{Q}$ whose $2$-class groups are  of type $(2,2^n)$, with $n\geq 2$.
    The novelty in our  procedure to deal with this problem is the combination of some tools from  Iwasawa theory and the rank of some unramified extensions.

   Let  $d $ be  an odd positive square-free integer. Consider the  biquadratic number field $K=\mathbb{Q}({\sqrt{d}, i })$.  	Denote by 	$K_\infty$ the cyclotomic    $\mathbb Z_2$-extension of $K$,  that is  a Galois extension of $K$ whose Galois group is topologically isomorphic to the $2$-adic ring $\mathbb{Z}_2$. It is well known that for any integer $n\geq0$, the field $K_\infty$ contains a unique cyclic subfield $K_n$  of degree $2^n$ over $K$  called the $n$-th	layer of the $\mathbb Z_2$-extension of $K$. Denote by $A(K_n)$ (or simply $A_n$) the $2$-part of the ideal class group of  $K_n$ and let $\mathcal{L}(K_n)$ be the maximal unramified $2$-extension of $K_{n}$.
   The cyclotomic $\ZZ_2$-extension of $K$ is given by the sequence $K_n=K_{n, d}=\mathbb{Q}({\zeta_{2^{n+2}}, \sqrt{d}})$ such that
 $$  K_{1, d}=\mathbb{Q}({\zeta_{2^{3}}, \sqrt{d}  }) \subset   K_{2, d}=\mathbb{Q}({ \zeta_{2^{4}}, \sqrt{d} }) \subset \cdots \subset  K_{n, d}=\mathbb{Q}({\zeta_{2^{n+2}}, \sqrt{d}}) \cdots$$
In this article, we are interested in giving, for $n\geq1$, the length of the $2$-class field tower of $K_{n}$, the structure of the Galois group $\mathrm{Gal}(\mathcal{L}(K_{n})/K_{n})$ and the capitulation of the $2$-ideal classes of $K_{n}$ in its three unramified quadratic extensions for some fields $K$. Our main theorem is the following.

  \begin{theorem}\label{thm the main theorem}
  	Let $d$ be a square-free integer and  $n\geq1$ a positive integer. Let    $m$ be such that $2^m=h_2(-2d)$ and  $K_n=\mathbb{Q}({\zeta_{2^{n+2}}, \sqrt{d}  })$. Let $\mathcal{L}(K_n)$ be the maximal unramified $2$-extension of $K_{n}$ and put $G_n=\mathrm{Gal}(\mathcal{L}(K_{n})/K_{n})$.
  	Then we have
  	\begin{enumerate}[\rm1.]
  		\item If $d=q_1q_2$,  where $q_i\equiv3 \pmod{8}$  are two distinct primes,	then $\mathcal{L}(K_{n})=K_{n}^{(1)}$ and   $G_n$ is abelian isomorphic to    $\mathbb{Z}/2\mathbb{Z}\times\mathbb{Z}/2^{n+m-2}\mathbb{Z}$.
  		\item If $d=p$,  where $p\equiv9\pmod{16}$ is a prime such that   $(\frac{2}{p})_4=1$,  then $G_n$ is abelian isomorphic to $\mathbb{Z}/2\mathbb{Z}\times\mathbb{Z}/2^{n+m-2}\mathbb{Z}$ or modular,  and $\mathcal{L}(K_n)=K_n^{(1)}$ or  $K_n^{(2)}$. In  particular for
  		$n=1$,   	$G_1=Gal(K_{1, d}^{(1)}/K_{1, d})\simeq \mathbb{Z}/2\mathbb{Z}\times\mathbb{Z}/2^{m-1}\mathbb{Z}$ is abelian.
  	\end{enumerate}
  Put $G=\mathrm{Gal}(\mathcal{L}(K_{\infty})/K_{\infty})$. For the   abelian case we have  $G\simeq X_\infty\simeq  \ZZ/2\ZZ \times \ZZ_2$, otherwise $G/G'\simeq X_\infty\simeq  \ZZ/2\ZZ \times \ZZ_2$, where $G'$ is the commutator subgroup of $G$.
  \end{theorem}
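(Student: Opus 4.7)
The strategy has three phases: determine the structure of $A(K_n)$, analyse the three unramified quadratic extensions of $K_n$ to pin down $G_n$, and then pass to the inverse limit up the $\ZZ_2$-tower.

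First, I would establish that $A(K_n)\simeq \ZZ/2\ZZ\times\ZZ/2^{n+m-2}\ZZ$ for all $n\geq 1$. The base case $n=1$ should come from a direct study of $K_1=\QQ(\zeta_8,\sqrt{d})$ using genus theory together with the hypothesis $h_2(-2d)=2^m$, exploiting the fact that $\QQ(\sqrt{-2d})$ is one of the quadratic subfields of $K_1$ and that $2$ is the only ramified prime in $K_1/\QQ(\sqrt{-2d})$. For general $n$, I would invoke Iwasawa theory: the desired shape forces $\mu(K_\infty/K)=0$ and $\lambda(K_\infty/K)=1$ with a cyclic quotient. Combining the ambiguous class number formula with Iwasawa's class number formula, one propagates the base case up the tower, using crucially that only the prime above $2$ ramifies in $K_\infty/K$.

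Second, having $A(K_n)$ of type $(2,2^{n+m-2})$, I would enumerate the three unramified quadratic extensions $F_1,F_2,F_3$ of $K_n$ and determine (at least) the $2$-ranks of their class groups. For case (1), where $d=q_1q_2$ with $q_i\equiv 3\pmod 8$, the congruence conditions on the $q_i$ should force the capitulation kernel in each $F_j$ to contain $A(K_n)^{2}$, so that each $\mathrm{Cl}_2(F_j)$ has $2$-rank at most $1$. By the Blackburn/Kisilevsky criterion cited in the introduction, $G_n$ is then abelian, $G_n\simeq A(K_n)$, and $\mathcal{L}(K_n)=K_n^{(1)}$. For case (2), with $d=p\equiv 9\pmod{16}$ and $(\tfrac{2}{p})_4=1$, the same kind of analysis together with a local norm computation should exhibit at most one of the $F_j$ with $2$-rank equal to $2$; the classification of Benjamin--Lemmermeyer--Snyder type for $2$-class groups of shape $(2,2^k)$ then leaves only the abelian group and the modular $2$-group as possibilities for $G_n$, so $\mathcal{L}(K_n)\in\{K_n^{(1)},K_n^{(2)}\}$. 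The subcase $n=1$ degenerates to the abelian one because the bound $m-1$ on the exponent forces $A(K_1^{(1)})=1$ in every configuration.

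Finally, for the inverse-limit statement I would push the finite-level results through $\varprojlim$. Compatibility of the norm maps along $K_n\subset K_{n+1}$ with the uniform structure just computed yields $X_\infty=\varprojlim A(K_n)\simeq \ZZ/2\ZZ\times\ZZ_2$ as $\Lambda$-modules. In the abelian case $G_n=A(K_n)$ at every layer, so $G\simeq X_\infty$ directly. In the modular case $G$ itself is non-abelian, but since abelianization commutes with inverse limits, $G/G'=\varprojlim(G_n/G_n')=\varprojlim A(K_n)\simeq \ZZ/2\ZZ\times\ZZ_2$.

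The main obstacle will be the second step, namely controlling the $2$-ranks and capitulation kernels of the three unramified quadratic extensions $F_j/K_n$ uniformly in $n$. Since $[K_n:\QQ]=2^{n+2}$ grows, explicit unit-group calculations become heavy, and the saving idea is presumably to reduce the computation to the bottom layer $K_1$ via the Iwasawa-theoretic control already in hand, transporting capitulation data upward using that only the unique prime above $2$ ramifies in $K_\infty/K$.
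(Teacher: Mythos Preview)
Your plan has a genuine gap in case~1, and a smaller one in the $n=1$ subcase of case~2.

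For case~1 you propose to show that each of the three unramified quadratic extensions $F_j/K_n$ has cyclic $2$-class group and then invoke a Blackburn/Kisilevsky-type criterion to conclude $G_n$ is abelian. But cyclicity of all three $\mathrm{Cl}_2(F_j)$ does \emph{not} force $G_n$ to be abelian: the modular group $M_{2^k}$ also has this property (indeed this is exactly why your case~2 analysis ends with ``abelian or modular''). To separate abelian from modular one needs the \emph{exact} order $h_2(F_j)=|A(K_n)|/2$ for at least one $F_j$; this is Lemma~\ref{lm benjLS}. The paper obtains this for the genus field $F_n=\QQ(\zeta_{2^{n+2}},\sqrt{q_1},\sqrt{q_2})$ by running Iwasawa theory on the tower $(F_n)_n$ rather than on $(K_n)_n$: one computes $\lambda^-(F)=1$ via Kida's formula, proves $h_2(F_n^+)$ is odd for all $n$ (this is where the heavy unit computations of Proposition~\ref{lemma on units } enter), and then pins down $h_2(F_1)$ exactly using Kuroda's class-number formula together with the unit index $q(F_1)$. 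Your capitulation-kernel heuristic (``contains $A(K_n)^2$'') is also problematic on its own terms: for $n+m-2\geq 3$ the subgroup $A(K_n)^2$ has order $2^{n+m-3}\geq 4$, but the capitulation kernel in an unramified quadratic extension has order at most $4$, and you give no mechanism tying this to the $2$-rank of $\mathrm{Cl}_2(F_j)$.

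For the $n=1$ subcase of case~2, your sentence ``the bound $m-1$ on the exponent forces $A(K_1^{(1)})=1$ in every configuration'' does not hold: in the modular configuration $|G_1|=2|A(K_1)|=2^{m+1}$, so $\mathrm{Cl}_2(K_1^{(1)})$ would have order $2$, not $1$. The paper instead observes that $K_{1,p}$ is the genus field of $\QQ(\sqrt{2p},i)$ and imports the abelianness of the latter's second $2$-class group from \cite{taous2008}; nothing intrinsic to $K_{1,p}$ alone seems to rule out the modular case here.

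Your case~2 outline for general $n$ (construct two unramified quadratic extensions with cyclic $2$-class group, conclude abelian-or-modular) is essentially what the paper does, though the paper is explicit: it writes $p=a^2+16b^2=e^2-32f^2$, sets $L_{n,r}=K_{n,p}(\sqrt{\pi_r})$, checks unramifiedness via Lemma~\ref{lm hilbert}, and uses the ambiguous class number formula twice to get cyclicity.
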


  The proof of this theorem is detailed in the next sections. Therein one can find   results on unit groups,  $2$-class groups,  computations of Iwasawa  $\lambda$-invariant and some corollaries.
\section*{Notations}
  Let $k$ be a number field. Throughout this paper, we adopt the following notations.
{\begin{enumerate}[\rm$\star$]
		\item $q_1$,  $q_2$: Two primes congruent to $3\pmod 8$,
		\item $k^+$: The maximal real subfield of  $k$,
		\item $n$: An integer $\geq 1$,
		\item $k_n$: The $n$-th layer of the $\mathbb{Z}_2$-extension of $k$,
		\item $\lambda(k)$: The Iwasawa $\lambda_2$-invariant   of $k$,
		\item   $\lambda(k)^-$: The Iwasawa   $\lambda_2^-$-invariant  of $k$,
		\item $\mathrm{Cl}_2(k)$:  The $2$-class group of $k$,
		\item $rank(\mathrm{Cl}_2(k))$:  The rank of the $2$-class group of $k$,
     	\item $K_{n, d}$:  $\mathbb{Q}({\zeta_{2^{n+2}}, \sqrt{d}  })$,
         \item 	$	F_n$: $\mathbb{Q}( \zeta_{2^{n+2}},  \sqrt{q_1},   \sqrt{q_2})$,
		\item $h(k)$:  The class number of the  field $k$,
        \item $h_2(d)$:  The $2$-class number of the quadratic field $\mathbb{Q}(\sqrt{d})$,
		\item  $\varepsilon_d$: The fundamental unit of the quadratic field $\mathbb{Q}(\sqrt{d})$,
		\item $E_k$: The unit group of $k$,
		\item $q(k):=[E_{k}: \prod_{i}E_{k_i}]$,  with  $k_i$ are  the  quadratic subfields	of $k$,	
		\item $Q_{k}$:   The  Hasse's unit  index, that is  $[E_{k}:W_{k}k^{+}]$, if $k/k^+$ is CM,
        \item FSU: A fundamental system of units,
        \item $N_{K/k}$: The norm map of an extension $K/k$,
        \item $e$: Defined by $[E_{k}:E_{k}\cap N_{K/k}(K^*)]=2^{ e}$, for a $2$-extension $K/k$.	
\end{enumerate}}	

 \section{\textbf{Units and $2$-class numbers  of some multiquadratic  fields}}
  Let us first recall the result stated by Wada \cite{wada} to determine a fundamental system of units of a multiquadratic number field.   Let $K_0$ be a multiquadratic number field. Denote by $\sigma$ and $\tau$ two different generators of the group $\mathrm{Gal}(K_0/\QQ)$, let then $K_1$, $K_2$ and $K_3$ be respectively the invariant subfields of $K_0$ by $\sigma$,  $\tau$ and $\sigma\tau$, and $E_{K_i}$ the unit group of $K_i$. Then the unit group $E_{K_0}$ of $K_0$ is generated by the elements of each $E_i$ and the square roots of elements of the product $E_{K_1}E_{K_2}E_{K_3}$ that are perfect squares in $K_0$.  \label{remainder on units}

  Put  $K=K_0(i)$,  then  to determine  a fundamental system of units of $K$,  we will use the following result (cf. \cite[p. 18]{azizunints99})  that the third author has deduced from a theorem of Hasse \cite[\S 21, Satz 15 ]{Ha-52}.

   \begin{lemma}\label{Lemme azizi} Let $K_0$ be a real number field,  $K=K_0(i)$ a quadratic extension of $K_0$,  $n\geq 2$ be an integer and $\xi_n$ a $2^n$-th primitive root of unity,  then
  	$	\xi_n=\frac{1}{2}(\mu_n+\lambda_ni)$,  where $\mu_n=\sqrt{2+\mu_n}$,  $\lambda_n=\sqrt{2-\mu_{n-1}}$,  $\mu_2=0$,  $\lambda_0=2$ and $\mu_3=\lambda_3=\sqrt{2}$. Let $n_0$ be the greatest
  	integer such that $\xi_{n_0}$ is contained in $K$,  $\{\varepsilon_1,  ...,  \varepsilon_r\}$ a fundamental system of units of $K_0$ and $\varepsilon$ a unit of $K_0$ such that
  	$(2+\mu_{n_0})\varepsilon$ is a square in $K_0\ ($if it exists$)$. Then a fundamental system of units of $K$ is one of the following systems:
  	\begin{enumerate}[\rm 1.]
  		\item $\{\varepsilon_1,  ...,  \varepsilon_{r-1},  \sqrt{\xi_{n_0}\varepsilon } \}$ if $\varepsilon$ exists,   in this case $\varepsilon=\varepsilon_1^{j_1}...\varepsilon_{r-1}^{j_1}\varepsilon_r$,
  		where $j_i\in \{0,  1\}$.
  		\item $\{\varepsilon_1,  ...,  \varepsilon_r \}$ elsewhere.
  		
  	\end{enumerate}
  \end{lemma}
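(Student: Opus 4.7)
The plan is to reduce the lemma to a computation of the Hasse unit index $Q_K := [E_K : W_K E_{K_0}]$ for the CM extension $K = K_0(i)$ over the real field $K_0$. I would first show $Q_K \in \{1,2\}$ by the standard argument: for any $\eta \in E_K$, the quotient $\eta/\tau(\eta)$ (where $\tau$ is complex conjugation) is integral with all archimedean absolute values equal to $1$, hence a root of unity by Kronecker's theorem; writing $\eta^2 = (\eta\tau(\eta))\cdot(\eta/\tau(\eta))$ then shows $\eta^2 \in W_K E_{K_0}$. When $Q_K = 1$, case 2 of the lemma is immediate, since $E_K = \langle W_K, E_{K_0}\rangle = \langle \xi_{n_0}, \varepsilon_1,\ldots,\varepsilon_r\rangle$.

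When $Q_K = 2$, pick $\eta \in E_K \setminus W_K E_{K_0}$. The image of $\eta/\tau(\eta)$ must generate $W_K/W_K^{2}$ (otherwise a suitable root of unity could absorb it and land $\eta$ in $W_K E_{K_0}$). After adjusting $\eta$ by a root of unity we may take $\eta/\tau(\eta) = \xi_{n_0}$, and then $\eta^2 = \xi_{n_0}\cdot N_{K/K_0}(\eta) = \xi_{n_0}\varepsilon$ for some $\varepsilon \in E_{K_0}$. So the condition $Q_K = 2$ is \emph{equivalent} to the existence of $\varepsilon \in E_{K_0}$ such that $\xi_{n_0}\varepsilon$ is a square in $K$.

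The core technical step is transferring this square condition from $K$ to $K_0$. Using $\mu_{n_0} = \xi_{n_0} + \xi_{n_0}^{-1}$, one derives the key identity
\[ (1+\xi_{n_0})^2 = 1 + 2\xi_{n_0} + \xi_{n_0}^2 = \xi_{n_0}(\xi_{n_0}^{-1} + 2 + \xi_{n_0}) = \xi_{n_0}(2 + \mu_{n_0}), \]
so $\xi_{n_0}\varepsilon$ is a square in $K$ iff $(2+\mu_{n_0})\varepsilon$ is. Since $(2+\mu_{n_0})\varepsilon \in K_0$ is real, writing a purported square root in $K = K_0(i)$ as $u + vi$ and expanding forces $uv = 0$, so either $(2+\mu_{n_0})\varepsilon$ or $-(2+\mu_{n_0})\varepsilon$ is a square in $K_0$. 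A signature check using $2+\mu_{n_0} > 0$ in every real embedding (since $|\mu_{n_0}| < 2$) rules out the second alternative, recovering exactly the hypothesis of case~1.

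Finally, to obtain the stated form of the FSU, I would observe that $\varepsilon$ is determined only modulo $E_{K_0}^{2}$, so after writing $\varepsilon = \prod \varepsilon_i^{j_i}$ with $j_i \in \{0,1\}$, at least one $j_i$ must be odd (otherwise $\xi_{n_0}$ itself would be a square in $K$, contradicting the maximality of $n_0$); reindexing makes this $\varepsilon_r$. Replacing $\varepsilon_r$ in the original basis by $\sqrt{\xi_{n_0}\varepsilon}$ then preserves a $\mathbb{Z}$-basis of $E_K/W_K$, yielding case~1. The \textbf{main obstacle} is the signature analysis in step three: one must carefully carry through the sign bookkeeping so that the lemma's formulation with $(2+\mu_{n_0})\varepsilon$ (rather than its negative) emerges as the correct existence criterion, and one must verify that the choice of $\eta/\tau(\eta) = \xi_{n_0}$ (as opposed to another odd-order generator of $W_K/W_K^2$) is legitimate — i.e., that any nontrivial class in $E_K/W_K E_{K_0}$ can be normalized this way.
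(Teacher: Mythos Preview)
The paper does not supply a proof of this lemma; it is quoted from Azizi \cite{azizunints99}, who in turn derives it from Hasse \cite[\S21, Satz~15]{Ha-52}. There is therefore no in-paper argument to compare against.

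Your outline is essentially the classical Hasse argument and is correct. The identity $(1+\xi_{n_0})^{2}=\xi_{n_0}(2+\mu_{n_0})$ is exactly the bridge between ``$\xi_{n_0}\varepsilon$ is a square in $K$'' and ``$(2+\mu_{n_0})\varepsilon$ is a square in $K_0$'', and the normalization $\eta/\tau(\eta)=\xi_{n_0}$ is legitimate: $W_K$ is cyclic, so $W_K/W_K^{2}$ has order~$2$, and multiplying $\eta$ by a suitable power of $\xi_{n_0}$ moves $\eta/\tau(\eta)$ freely within its $W_K^{2}$-coset. Your worry about ``another odd-order generator'' is therefore unfounded.

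One point to tighten: in the signature step you invoke $2+\mu_{n_0}>0$ at every real place to exclude the alternative $-(2+\mu_{n_0})\varepsilon\in K_0^{*2}$, but that inference also requires $\varepsilon$ to be totally positive. This is in fact true --- you defined $\varepsilon=N_{K/K_0}(\eta)=\eta\,\tau(\eta)$, and under any real embedding of $K_0$ extended to $K$ this equals $|\eta|^{2}>0$ --- but it should be stated explicitly; without it the step does not close. Once that is added, the ``main obstacle'' you flag disappears and the argument is complete.
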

\noindent We will also need the following lemmas about units.
  \begin{lemma}[\cite{Az-00},  {Lemma 5}]\label{1:046}
  	Let $d>1$ be a square-free integer and $\varepsilon_d=x+y\sqrt d$,
  	where $x$,  $y$ are  integers or semi-integers. If $N(\varepsilon_d)=1$,  then $2(x+1)$,  $2(x-1)$,  $2d(x+1)$ and
  	$2d(x-1)$ are not squares in  $\QQ$.
  \end{lemma}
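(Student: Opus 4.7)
My plan is to argue by contradiction: assume that one of the four listed quantities is a square in $\QQ$, and from this assumption manufacture an element $\eta\in\QQ(\sqrt{d})$ satisfying $\eta^{2}=\varepsilon_d$. Such an $\eta$ is a root of the monic integral equation $X^{2}-\varepsilon_d=0$ over $\OO_{\QQ(\sqrt{d})}$, so it lies in $\OO_{\QQ(\sqrt{d})}$, and $N(\eta)^{2}=N(\varepsilon_d)=1$ shows $\eta$ is in fact a unit of $\QQ(\sqrt{d})$. Since $\varepsilon_d>1$, the identity $\eta^{2}=\varepsilon_d$ forces $|\eta|=\sqrt{\varepsilon_d}$, producing a unit strictly between $1$ and $\varepsilon_d$, in contradiction with the assumption that $\varepsilon_d$ is the fundamental unit.

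The key computational input is a pair of identities available precisely because $N(\varepsilon_d)=1$: the conjugate $\bar{\varepsilon}_d=x-y\sqrt{d}$ then coincides with $\varepsilon_d^{-1}$, so $\varepsilon_d+\varepsilon_d^{-1}=2x$. Expanding the squares $(1+\varepsilon_d)^{2}$ and $(\varepsilon_d-1)^{2}$ and factoring $\varepsilon_d$ out gives
$$(1+\varepsilon_d)^{2}=2(x+1)\,\varepsilon_d,\qquad (\varepsilon_d-1)^{2}=2(x-1)\,\varepsilon_d,$$
and multiplying each through by $d$ yields
$$\bigl(\sqrt{d}(1+\varepsilon_d)\bigr)^{2}=2d(x+1)\,\varepsilon_d,\qquad \bigl(\sqrt{d}(\varepsilon_d-1)\bigr)^{2}=2d(x-1)\,\varepsilon_d.$$
Each of the four hypothetical equalities $2(x\pm1)=a^{2}$ or $2d(x\pm1)=a^{2}$, with $a\in\QQ^{\times}$, now furnishes an explicit $\eta$, namely $(1+\varepsilon_d)/a$, $(\varepsilon_d-1)/a$, $\sqrt{d}(1+\varepsilon_d)/a$ or $\sqrt{d}(\varepsilon_d-1)/a$ respectively, each visibly lying in $\QQ(\sqrt{d})$ and each squaring to $\varepsilon_d$, so the contradiction from the first paragraph applies uniformly.

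I do not foresee any serious obstacle in this argument: the four cases are dispatched in parallel by the same descent, and the only delicate point is the automatic integrality of $\eta$ over $\ZZ$, which is nevertheless immediate from the monic relation $\eta^{2}=\varepsilon_d$ and the integrality of $\varepsilon_d$. The hypothesis $N(\varepsilon_d)=1$ is used essentially, since without it the clean expression $\varepsilon_d+\varepsilon_d^{-1}=2x$ fails and the four identities above break down, which is consistent with the statement of the lemma.
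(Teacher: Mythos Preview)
The paper does not supply its own proof of this lemma; it is quoted from \cite{Az-00} and used as a black box. So there is no in-paper argument to compare your proposal against, and the only question is whether your argument stands on its own.

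It does. The two identities
\[
(1+\varepsilon_d)^2 = 2(x+1)\,\varepsilon_d, \qquad (\varepsilon_d-1)^2 = 2(x-1)\,\varepsilon_d
\]
follow exactly as you say from $\varepsilon_d+\varepsilon_d^{-1}=2x$, and multiplying by $d$ handles the remaining two cases. If any of the four quantities equals $a^2$ with $a\in\QQ^\times$, the corresponding $\eta\in\QQ(\sqrt d)$ satisfies $\eta^2=\varepsilon_d$; integrality of $\eta$ is immediate from the monic relation $\eta^2-\varepsilon_d=0$ and the integral closedness of $\mathcal{O}_{\QQ(\sqrt d)}$, and $N(\eta)=\pm1$ makes it a unit. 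Replacing $\eta$ by $|\eta|$ gives a unit in the open interval $(1,\varepsilon_d)$, contradicting fundamentality. One small point you leave implicit: the quantities $2(x\pm1)$ are nonzero, since $x=1$ together with $N(\varepsilon_d)=1$ would force $y=0$ and $\varepsilon_d=1$, while $2x=\varepsilon_d+\varepsilon_d^{-1}>1$ rules out $x\le 0$; so $a\in\QQ^\times$ is legitimate and the division by $a$ is harmless.
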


  \begin{lemma}[\cite{ZAT-15}]\label{3:105}
  	Let $d\equiv1\pmod4$ be a positive square free integer and   $\varepsilon_d=x+y\sqrt d$ be the fundamental unit of  $\QQ(\sqrt d)$. Assume   $N(\varepsilon_d)=1$,  then
  	\begin{enumerate}[\rm\indent1.]
  		\item $x+1$ and $x-1$ are not squares in  $\NN$,  i.e.,  $2\varepsilon_{d}$ is not a square in  $\QQ(\sqrt{d})$.
  		\item For all prime  $p$ dividing   $d$,  $p(x+1)$ and $p(x-1)$ are not squares in  $\NN$.
  	\end{enumerate}
  \end{lemma}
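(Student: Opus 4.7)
The plan is to reduce both assertions to the non-solvability of a quadratic Diophantine equation modulo $4$, which is where the hypothesis $d\equiv 1\pmod 4$ enters decisively. First I would dispose of the case when $x,y$ are half-integers (possible only for $d\equiv 5\pmod 8$): then $x\pm 1$ and $p(x\pm 1)$ (for $p$ an odd prime dividing $d$) are not even in $\NN$, so the statements are trivially true. I may therefore assume $x,y\in\ZZ$ throughout.

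For part 1, assume toward contradiction that $x+1=u^2$ with $u\in\NN$. If $x$ were odd, then $x\pm 1$ would both be even with $\gcd$ equal to $2$; substituting $y=2c$ into $(x-1)(x+1)=dy^2$ and using the coprime square-free factorization would force $x+1=2d_2b^2$ for some odd $d_2\mid d$ and some integer $b$, which cannot be a square since its $2$-adic valuation is odd. Hence $x$ is even, $\gcd(x-1,x+1)=1$, and the coprime square-free factorization of $(x-1)(x+1)=dy^2$ together with $x+1=u^2$ yields $x-1=dv^2$ and $y=uv$, so that
\[u^2-dv^2=(x+1)-(x-1)=2.\]
Reducing modulo $4$ and using $d\equiv 1\pmod 4$ gives $u^2-v^2\equiv 2\pmod 4$, which is impossible since $u^2,v^2\in\{0,1\}\pmod 4$. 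The case $x-1=u^2$ is symmetric and leads to $dv^2-u^2=2$, the same obstruction. For the stated equivalence: if $\eta^2=2\varepsilon_d$ with $\eta=u+v\sqrt d\in\mathcal O_{\QQ(\sqrt d)}$, matching coefficients and norms yields $u^2+dv^2=2x$ and $u^2-dv^2=\pm 2$, hence $u^2=x\pm 1$ is a natural square; conversely $x\pm 1=u^2$ produces such an $\eta$ as above. The half-integer possibility $\eta=(a+b\sqrt d)/2$ with $a,b$ odd is ruled out by reducing $a^2+db^2=8x$ modulo $8$ using $d\equiv 1\pmod 4$.

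For part 2, assume $p(x+1)=u^2$ with $p$ an odd prime dividing $d$. The parity argument again rules out $x$ odd, and for $x$ even the coprimality of $x\pm 1$, the square-freeness of $d$, and $p\mid u$ force $x+1=pw^2$, $x-1=(d/p)v^2$, and $y=wv$, whence
\[pw^2-(d/p)v^2=2.\]
Since $p\cdot(d/p)=d\equiv 1\pmod 4$, the residues of $p$ and $d/p$ modulo $4$ are both $1$ or both $3$; in either subcase the left-hand side lies in $\{0,\pm 1\}\pmod 4$, contradicting the right-hand side. The case $p(x-1)=u^2$ is handled symmetrically.

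In my view, the main obstacle is not any single step but the preliminary bookkeeping — handling the half-integer and $x$-odd cases, and invoking the coprime square-free factorization with the correct splitting of $d$ — that transforms each ``square'' hypothesis into a clean equation of the form $u^2-dv^2=2$ or $pw^2-(d/p)v^2=2$. Once such an equation is isolated, the mod-$4$ obstruction uses precisely the assumption $d\equiv 1\pmod 4$ and closes the argument immediately.
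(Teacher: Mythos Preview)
The paper does not supply its own proof of this lemma; it is quoted from \cite{ZAT-15} and used as a black box. So there is nothing to compare your argument against, and I evaluate it on its own merits.

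Your proof is correct, but it carries some dead weight. Once you have reduced to $x,y\in\ZZ$, the relation $x^2-dy^2=1$ together with $d\equiv 1\pmod 4$ forces $x^2-y^2\equiv 1\pmod 4$, hence $x$ is \emph{always} odd and $y$ even. Thus the ``$x$ even'' branch --- which you develop most fully, deriving $u^2-dv^2=2$ and $pw^2-(d/p)v^2=2$ and killing them mod $4$ --- is vacuous. The entire content of both parts is already contained in your preliminary ``$x$ odd'' argument: from $x\pm 1=2d_{1,2}\,b^2$ with $d_{1,2}\mid d$ odd, both $x\pm 1$ and $p(x\pm 1)$ have odd $2$-adic valuation and hence cannot be squares. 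Your handling of the half-integer case and of the equivalence with $2\varepsilon_d$ being a square is fine. In short: the proof stands, but you could cut it roughly in half by observing up front that $x$ is necessarily odd.
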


  \begin{lemma}\label{lm expressions of units under cond 1}
  Let	$q_1\equiv     	q_2\equiv  3\pmod 8$	 be two primes such that $\left(\dfrac{	q_1}{	q_2}\right)=1$.
  	\begin{enumerate}[\rm 1.]
  		\item Let  $x$ and $y$   be two integers such that
  		$ \varepsilon_{2q_1q_2}=x+y\sqrt{2q_1q_2}$. Then
  		\begin{enumerate}[\rm a.]
  			\item $(x-1)$ is a square in $\NN$,
  			\item 	  $\sqrt{2\varepsilon_{2q_1q_2}}=y_1 +y_2\sqrt{2q_1q_2}$ and 	$2= -y_1^2+ 2q_1q_2y_2^2$,  for some integers $y_1$ and $y_2$ such that $y=y_1y_2$.
  		\end{enumerate}

  		\item  There are two integers     $a$ and $b$   such that
  		$ \varepsilon_{q_1q_2}=a+b\sqrt{q_1q_2}$ and we have
  		\begin{enumerate}[\rm a.]
  			\item $2q_1(a+1)$ is a square in $\NN$,
  			\item  $b$ is even,   $\sqrt{ \varepsilon_{q_1q_2}}=b_1\sqrt{q_1}+b_2\sqrt{q_2}$ and 	$1=q_1b_1^2-q_2b_2^2$  for some integers $b_1$ and $b_2$ such that $b=2b_1b_2$.
  		\end{enumerate}
  		\item  Let    $c $ and $d$ be two integers such that
  		$ \varepsilon_{2q_i}=c +d\sqrt{2q_i}$. Then  we have
  		\begin{enumerate}[\rm a.]
  			\item   $c-1$ is a square in $\NN$,
  			\item  $\sqrt{ 2\varepsilon_{  2q_i}}=d_1 +d_2\sqrt{2q_i}$ and 	$2=-d_1^2+2q_id_2^2$,  for some integers $d_1$ and $d_2$ such that $d=d_1d_2$.
  		\end{enumerate}
  		\item  Let    $\alpha $ and $\beta$ be two integers such that
  		$ \varepsilon_{q_i}=\alpha +\beta\sqrt{q_i}$. Then  we have
  		\begin{enumerate}[\rm a.]
  			\item   $\alpha-1$ is a square in $\NN$,
  			\item  $\sqrt{ 2\varepsilon_{  q_i}}=\beta_1 +\beta_2\sqrt{q_i}$ and 	$2=-\beta_1^2+q_i\beta_2^2$,  for some integers $\beta_1$ and $\beta_2$ such that $\beta=\beta_1\beta_2$.
  		\end{enumerate}
  	\end{enumerate}
  \end{lemma}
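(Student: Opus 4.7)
My plan is to treat the four assertions by a single Pell-equation template. The opening step in each case is the norm computation $N(\varepsilon_d)=1$: for $d\in\{q_i,q_1q_2\}$ this holds because $d$ has a prime divisor $\equiv 3\pmod 4$, while for $d\in\{2q_i,2q_1q_2\}$ it follows from $\left(\tfrac{2}{q_i}\right)=-1$, which is where $q_i\equiv 3\pmod 8$ (not merely $3\pmod 4$) first gets used. Writing $\varepsilon_d=x+y\sqrt d$, the Pell identity $(x-1)(x+1)=dy^2$ then reduces the problem to enumerating the coprime factorizations of the left-hand side, eliminating all but one using Lemmas~\ref{1:046} and~\ref{3:105} together with reductions modulo~$8$, and finally reading off parts~(a) and~(b) from the surviving decomposition.

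I would start with part~(4) as the cleanest prototype. Assuming $\alpha$ odd and writing $\alpha\pm 1=2u,2v$ with $\gcd(u,v)=1$ leads to $\alpha\pm 1\in\{2s^2,2q_it^2\}$, both of which make one of $2(\alpha\pm 1)$ or $2q_i(\alpha\pm 1)$ a square, contradicting Lemma~\ref{1:046}; hence $\alpha$ is even, and the coprime factorization forces $\{\alpha-1,\alpha+1\}=\{s^2,q_it^2\}$ with $\beta=st$. The delicate step is then to exclude the alternative $\alpha+1=s^2$, $\alpha-1=q_it^2$: since $s,t$ are odd and $q_i\equiv 3\pmod 8$, one computes $\alpha=q_it^2+1\equiv 4\pmod 8$, so $\alpha+1\equiv 5\pmod 8$ while $s^2\equiv 1\pmod 8$, a contradiction. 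This pins down $\alpha-1=s^2$, giving (a); and setting $\beta_1=s$, $\beta_2=t$ the identities $\beta_1^2+q_i\beta_2^2=2\alpha$ and $q_i\beta_2^2-\beta_1^2=(\alpha+1)-(\alpha-1)=2$ are immediate, giving (b).

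Parts~(1), (2), (3) follow the same template with $x$ forced odd, so I would write $x-1=2u$, $x+1=2v$ and enumerate the coprime factorizations of $uv$. Lemma~\ref{1:046} eliminates the splittings for which $2(x\pm 1)$ or $2d(x\pm 1)$ becomes a square; in part~(2), since $d=q_1q_2\equiv 1\pmod 4$, Lemma~\ref{3:105} additionally kills the splittings making $a\pm 1$ or $q_i(a\pm 1)$ a square, while the assumption $\left(\tfrac{q_1}{q_2}\right)=1$ only fixes the labeling producing $a+1=2q_1\beta^2$, $a-1=2q_2\alpha^2$; from there $2q_1(a+1)=(2q_1\beta)^2$ is (a) and $(\beta\sqrt{q_1}+\alpha\sqrt{q_2})^2=\varepsilon_{q_1q_2}$ is (b), with $q_1\beta^2-q_2\alpha^2=1$ coming from $\tfrac{a+1}{2}-\tfrac{a-1}{2}=1$. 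Cases~(1) and~(3) are strictly parallel, the decomposition pinning down $x-1=4a^2$ and $x+1=2q_1q_2b^2$ (resp.\ $c-1$ a square and $c+1=2q_id_2^2$), and the square roots are then obtained by taking $y_1=2a$, $y_2=b$ (resp.\ $d_1,d_2$). The recurring main obstacle is the modulo-$8$ case analysis that kills the last ``wrong'' coprime factorization after Lemmas~\ref{1:046} and~\ref{3:105} have done their part; this is precisely where $q_i\equiv 3\pmod 8$ is essential, and once the right splitting is isolated, (a) is immediate from it and (b) is a routine substitution in the Pell identity.
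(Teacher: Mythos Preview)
Your template is exactly the paper's: norm $+1$, Pell factorization $(x-1)(x+1)=dy^2$, coprime enumeration, then Lemmas~\ref{1:046} and~\ref{3:105} to prune. The one substantive deviation is your elimination tool for the ``last wrong factorization'': you use reduction mod~$8$, whereas the paper reduces mod~$q_i$ and uses Legendre symbols. For items~(3) and~(4) your mod-$8$ argument is a perfectly valid (and arguably cleaner) substitute, and for item~(2) you correctly note that the hypothesis $\bigl(\tfrac{q_1}{q_2}\bigr)=1$ is what pins down the labeling --- that is already a Legendre-symbol argument, so there is no issue there.

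The gap is in item~(1). After Lemma~\ref{1:046} you still have the paper's three families of systems; your mod-$8$ reduction does kill the mixed splittings $\{2q_1s^2,\,q_2t^2\}$ and $\{q_1s^2,\,2q_2t^2\}$ (since $q_i\equiv 3\pmod 8$ forces $q_is^2-2q_jt^2\equiv 3$ or $5\pmod 8$). But it does \emph{not} kill the wrong sign of system~(3), namely $x+1=y_1^2$, $x-1=2q_1q_2y_2^2$. Writing $y_1=2y_1'$, this reduces to $2y_1'^2-q_1q_2y_2^2=1$; because $q_1q_2\equiv 1\pmod 8$, taking $y_1',y_2$ both odd gives $2-1\equiv 1\pmod 8$, which is consistent --- and the obstruction persists mod every power of~$2$. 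So parts~(1) and~(3) are not ``strictly parallel'': the product $q_1q_2$ loses the mod-$8$ signature that a single $q_i$ carries. The paper closes this case by reducing mod~$q_1$: from $x-1\equiv 0\pmod{q_1}$ one gets $x+1\equiv 2\pmod{q_1}$, hence $y_1^2\equiv 2\pmod{q_1}$, contradicting $\bigl(\tfrac{2}{q_1}\bigr)=-1$. You need this (or an equivalent quadratic-residue step) to finish item~(1).
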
	
  \begin{proof}
  	\begin{enumerate}[\rm 1.]
  		\item
  		It is known that $N(\varepsilon_{2q_1q_2})=1$, so  $x^2-1=y^22q_1q_2$, thus  by the unique prime factorization theorem in $\mathbb{Z}$ and Lemma \ref{1:046}  we have
  		$$(1):\ \left\{ \begin{array}{ll}
  		x\pm1=2q_1y_1^2\\
  		x\mp1=q_2y_2^2,
  		\end{array}\right.  \quad
  		(2):\ \left\{ \begin{array}{ll}
  		x\pm1=q_1y_1^2\\
  		x\mp1=2q_2y_2^2,
  		\end{array}\right.\quad
  		\text{ or }\quad
  		(3):\ \left\{ \begin{array}{ll}
  		x\pm1=y_1^2\\
  		x\mp1=2q_1q_2y_2^2,
  		\end{array}\right.
  		$$	
  		For some integers $y_1$ and $y_2$   such that $y=y_1y_2$.	
  		\begin{enumerate}[\rm$*$]
  			\item System $(1)$ can not occur. Indeed,
  			\begin{enumerate}[\rm $\bullet$]
  				\item   if $2q_1(x+1)$ is a square,  then  	$-1=\left(\frac{q_2}{q_1}\right)=\left(\frac{x-1}{q_1}\right)=\left(\frac{x+1- 2}{q_1}\right)= \left(\frac{ -2}{q_1}\right) =1.$ Which is absurd.
  				\item  similarly, if $2q_1(x-1)$ is a square,  then 	$-1=\left(\frac{2q_1}{q_2}\right)=\left(\frac{x-1}{q_2}\right)=\left(\frac{x+1- 2}{q_2}\right)= \left(\frac{- 2}{q_2}\right) =1.$ Which is  absurd too.
  			\end{enumerate}
  			\item We similarly eliminate the cases of system $(2)$ and the system  $\left\{ \begin{array}{ll}
  			x+1=y_1^2\\
  			x-1=2q_1q_2y_2^2.
  			\end{array}\right. $	
  		\end{enumerate}	
  		Thus, the only possible case is
  		$\left\{ \begin{array}{ll}
  		x-1=y_1^2\\
  		x+1=2q_1q_2y_2^2.
  		\end{array}\right.$\\	
  		Which implies that  $\sqrt{2\varepsilon_{2q_1q_2}}=y_1 +y_2\sqrt{2q_1q_2}$ and 	$2= -y_1^2+ 2q_1q_2y_2^2$.
  		
  		\item We have $q_1q_2\equiv 1\pmod 8$,  then there are two integers  $\varepsilon_{q_1q_2}=({x+y\sqrt d})/{2}$. 	As  $N(\varepsilon_{q_1q_2})=1$,  so
  		$x^2-4=y^2d$,  hence  $x^2-4\equiv y^2\pmod8$. On the other hand,  if we suppose that $x$ and $y$ are odd,  then $x^2\equiv y^2\equiv1\pmod8$,  but this implies the contradiction $-3\equiv1\pmod8$. Thus $x$ and $y$ are even. It follows that  there are two integers     $a$ and $b$   such that
  		$ \varepsilon_{q_1q_2}=a+b\sqrt{q_1q_2}$.

  		As  $N(\varepsilon_{q_1q_2})=1$, then,  by Lemma \ref{3:105} and  the unique prime factorization theorem, one deduces
  		$$(1):\ \left\{ \begin{array}{ll}
  		a+1=2q_1b_1^2\\
  		a-1=2q_2b_2^2,
  		\end{array}\right.\quad
  		\text{ or }\quad
  		(2):\ \left\{ \begin{array}{ll}
  		a-1=2q_1b_1^2\\
  		a+1=2q_2b_2^2,
  		\end{array}\right.
  		$$	
  		
  		For some integers $b_1$ and $b_2$   such that   $b=2b_1b_2$. As above we show that
  		$ \left\{ \begin{array}{ll}
  		a+1=2q_1b_1^2\\
  		a-1=2q_2b_2^2.
  		\end{array}\right. $ Thus,   $\sqrt{ \varepsilon_{ q_1q_2}}=b_1\sqrt{q_1}+b_2\sqrt{q_2}$ and 	$1=q_1b_1^2-q_2b_2^2$.  		
  	\end{enumerate}
  The items $3$ and $4$ are proved similarly.
  \end{proof}

 %\begin{theorem} \label{lemma on units }
% 	Let     $q_1\equiv q_2\equiv 3\pmod 8$ be two primes. Put $\LL=\mathbb{Q}(\sqrt{2},  \sqrt{q_1}, \sqrt{q_2})$.	Then
% 	$$E_{\LL}=\langle-1, \varepsilon_{  2}, \sqrt{\varepsilon_{   q_1q_2}}  ,    \sqrt{\varepsilon_{   q_1}},  \sqrt{\varepsilon_{   2q_1}},    \sqrt{\varepsilon_{   2q_1q_2}},
 %  \sqrt[4]{ {\varepsilon_{   q_1}}      {\varepsilon_{   q_2}}      {\varepsilon_{   2q_1q_2}} },
 %	\sqrt[4]{ {\varepsilon_{   2q_1}}       {\varepsilon_{   2q_2}}    {\varepsilon_{   2q_1q_2}}} \rangle$$
% 	And the class number of $\LL$ is odd.
% \end{theorem}

Let us now recall the   following class number formula for a multiquadratic number field  which is usually attributed to Kuroda \cite{Ku-50}, but it goes back to Herglotz \cite{He-22}  (cf. \cite[p. 27]{BFL}).
\begin{lemma}\label{wada's f.}
	Let $K$ be a multiquadratic number field of degree $2^n$, $n\in\mathds{N}$,  and $k_i$ the $s=2^n-1$ quadratic subfields of $K$. Then
	$$h(K)=\frac{1}{2^v}[E_K: \prod_{i=1}^{s}E_{k_i}]\prod_{i=1}^{s}h(k_i),$$
	with $$v=\left\{ \begin{array}{cl}
	n(2^{n-1}-1); &\text{ if } K \text{ is real, }\\
	(n-1)(2^{n-2}-1)+2^{n-1}-1 & \text{ if } K \text{ is imaginary.}
	\end{array}\right.$$
\end{lemma}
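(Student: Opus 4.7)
The plan is to prove this analytically, via the class number formula combined with the factorization of the Dedekind zeta function through characters of $G=\mathrm{Gal}(K/\QQ)\simeq(\ZZ/2\ZZ)^n$. Set $t=2^n-1$; each nontrivial character $\chi\in\widehat G$ is quadratic and corresponds to exactly one quadratic subfield $k_i$ with $\zeta_{k_i}(s)=\zeta_{\QQ}(s)L(s,\chi_i)$. Artin formalism gives $\zeta_K(s)=\prod_{\chi\in\widehat G}L(s,\chi)$, and combining these two identities produces the master relation
$$\zeta_{\QQ}(s)^{t-1}\,\zeta_K(s)=\prod_{i=1}^t\zeta_{k_i}(s).$$
Taking residues at $s=1$, and using $\mathrm{Res}_{s=1}\zeta_{\QQ}(s)=1$, gives $\mathrm{Res}_{s=1}\zeta_K(s)=\prod_i\mathrm{Res}_{s=1}\zeta_{k_i}(s)$.

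Next I would substitute the analytic class number formula
$$\mathrm{Res}_{s=1}\zeta_F(s)=\frac{2^{r_1(F)}(2\pi)^{r_2(F)}\,h(F)R(F)}{w(F)\sqrt{|d_F|}}$$
into both sides and solve for $h(K)$. This splits the residue identity into four auxiliary pieces: a discriminant ratio, which collapses to a pure power of $2$ via the conductor–discriminant formula $d_K=\prod_\chi\mathfrak f(\chi)$ combined with $d_{k_i}=\mathfrak f(\chi_i)$; a root-of-unity ratio $\prod_i w(k_i)/w(K)$, which is essentially harmless; a signature factor from the $2^{r_1}(2\pi)^{r_2}$ terms; and a regulator ratio $R(K)/\prod_i R(k_i)$.

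The regulator ratio is the main obstacle, and it is precisely where the unit index $[E_K:\prod_iE_{k_i}]$ and the power $2^v$ are born. I would write $R(K)=[E_K:\prod_iE_{k_i}]^{-1}\cdot R\bigl(\prod_iE_{k_i}\bigr)$ and compute the regulator of the subgroup $\prod_iE_{k_i}\subset E_K$ by assembling the logarithmic embeddings of a fundamental system of units for each $k_i$ into a block matrix indexed by the characters of $G$, and then invoking a Frobenius-type determinant factorization to express it as a product over $\widehat G$. A careful bookkeeping of the ranks of the individual $E_{k_i}$—which depend on whether $k_i$ is real or complex—shows that all $(2\pi)$-factors cancel, every archimedean factor reduces to a $2$-power, and the dichotomy in $v$ is forced by the two cases: if $K$ is totally real all $k_i$ are real of unit rank $1$, whereas if $K$ is CM the maximal real subfield $k^+$ is itself multiquadratic of degree $2^{n-1}$ and only the $2^{n-1}-1$ real $k_i\subset k^+$ contribute units of positive rank. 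Plugging the resulting formulas back into the residue identity and solving for $h(K)$ delivers exactly the stated expression, with the two values of $v$ matching the real and CM cases.
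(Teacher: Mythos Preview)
The paper does not prove this lemma at all: it is stated as a known class number formula ``usually attributed to Kuroda, but [going] back to Herglotz'', with a pointer to \cite{Ku-50}, \cite{He-22}, and \cite[p.~27]{BFL}. So there is no ``paper's own proof'' to compare against; the authors simply import the result from the literature.

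Your analytic approach---factor $\zeta_K$ through the characters of $\mathrm{Gal}(K/\QQ)\simeq(\ZZ/2\ZZ)^n$, compare residues at $s=1$, and reduce everything to a regulator-index computation---is exactly the classical route that Kuroda and Herglotz (and later Brauer, Wada, and others) used, so in that sense you have reconstructed the intended argument rather than found a new one. The sketch is sound in outline, but the regulator step is where all the content lies and your treatment of it is still schematic: the ``Frobenius-type determinant factorization'' needs to be made precise (one typically diagonalizes the log-embedding matrix over the character group and tracks the normalization constants coming from complex places), and the exact bookkeeping that produces $n(2^{n-1}-1)$ in the real case versus $(n-1)(2^{n-2}-1)+2^{n-1}-1$ in the CM case has to be carried out rather than asserted. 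None of this is a genuine obstruction---the computation is in the cited references---but as written your paragraph on the regulator is a plan, not a proof.
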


To use the above lemma we need to recall the values of the class numbers of some quadratic fields.
\begin{lemma}\label{class numbers of quadratic field}
	Let $q=q'\equiv 3\pmod 4$ be two distinct  primes. Then
	\begin{enumerate}[\rm 1.]
		\item By \cite[Corollary 18.4]{connor88}, we have  $h_2(q)=h_2(-q)=h_2(2q)= h_2(2)=h_2(-2)=h_2(-1)=h_2(qq')=1$.
		\item If  $q \equiv 3\pmod 8$, then by \cite[Corollary 19.6]{connor88}, $h_2(-2q)=2$.
		\item If   $q$ or $q'  \equiv 3\pmod 8$, then by \cite[p. 345]{kaplan76},   $h_2(2qq')=2$.
		\item If   $q'  \equiv 3\pmod 8$, with  $\left(\frac{q}{q'} \right)=1$, then by \cite[p. 354]{kaplan76},   $h_2(-qq')=4$. 	
	\end{enumerate}
	
\end{lemma}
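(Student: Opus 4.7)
The statement is purely a compendium of known 2-class numbers of small quadratic fields, each item already carrying an explicit reference. Accordingly, the plan is to write the proof as a short case-by-case verification, reducing every assertion to a direct citation and, where helpful, indicating the structural reason behind it. I would organize the proof into four items matching the statement, and in each item quote the precise corollary or table referenced in the lemma.

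For item 1, I would first record the three trivial cases $h_2(-1)=h_2(\pm 2)=1$, which are classical ($\mathbb{Z}[i]$, $\mathbb{Z}[\sqrt{-2}]$, $\mathbb{Z}[\sqrt{2}]$ are PIDs). The cases $h_2(q)=h_2(-q)=h_2(2q)=1$ for $q\equiv 3\pmod 4$ and $h_2(qq')=1$ for $q\equiv q'\equiv 3\pmod 4$ then follow from genus theory: each discriminant has at most two ramified primes, so the 2-rank of the class group is at most $1$, and the prescribed congruences on $q,q'$ (together with the norm of the fundamental unit) suffice to force the 2-rank to be $0$. This is exactly what is tabulated in \cite[Corollary 18.4]{connor88}, which I would simply invoke.

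For item 2, the discriminant of $\mathbb{Q}(\sqrt{-2q})$ is divisible by the two primes $2$ and $q$, so its 2-rank equals $1$; the hypothesis $q\equiv 3\pmod 8$ controls the relevant Rédei symbol and forces the 4-rank to vanish, giving $h_2(-2q)=2$ as in \cite[Corollary 19.6]{connor88}. Items 3 and 4 are handled in the same spirit by citing Kaplan: the condition that $q$ or $q'$ be $\equiv 3\pmod 8$ in item 3 fixes the Rédei data needed to pin $h_2(2qq')$ down to $2$, while in item 4 the hypotheses $q'\equiv 3\pmod 8$ and $\left(\tfrac{q}{q'}\right)=1$ together produce a nontrivial 4-rank contribution, lifting $h_2(-qq')$ from $2$ to $4$.

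Since every assertion is quoted verbatim from the cited sources, the only real obstacle is hygiene: making sure the conventions on discriminants, Kronecker symbols and the definition of the 2-class number match those of Connor--Hurrelbrink and Kaplan under our primes $q\equiv q'\equiv 3\pmod 8$. There is no substantive mathematical work to carry out beyond this sanity check.
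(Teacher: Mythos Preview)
Your proposal is correct and matches the paper's approach exactly: the paper gives no separate proof at all, since every item of the lemma already carries its citation inline, so the statement is self-certifying. Your added remarks on genus theory and R\'edei data are harmless gloss but not needed---the paper simply treats the lemma as a list of quotations.
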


\noindent Now we can state the   important result of this section which gives a FSU  of a triquadratic number field.
 \begin{proposition} \label{lemma on units }
 	Let     $q_1\equiv q_2\equiv 3\pmod 8$ be two different prime integers and $\LL=\mathbb{Q}(\sqrt{2},  \sqrt{q_1}, \sqrt{q_2})$.	Then the unit group of $\LL$ is
 %	$$E_{\LL}=\left\langle-1, \varepsilon_{  2} ,    \sqrt{\varepsilon_{   q_1}},  \sqrt{\varepsilon_{   2q_1}} , \sqrt{\varepsilon_{   q_1q_2}} ,    \sqrt{\varepsilon_{   2q_1q_2}},
 %	\sqrt{\sqrt{\varepsilon_{   q_1}}     \sqrt{\varepsilon_{   q_2}}     \sqrt{\varepsilon_{   2q_1q_2}} },
 %	\sqrt{\sqrt{\varepsilon_{   2q_1}}      \sqrt{\varepsilon_{   2q_2}}   \sqrt{\varepsilon_{   2q_1q_2}}} \right\rangle$$
	$$E_{\LL}=\left\langle-1, \varepsilon_{  2} ,    \sqrt{\varepsilon_{   q_1}},  \sqrt{\varepsilon_{   2q_1}} ,     \sqrt{\varepsilon_{   q_2}},
 	\sqrt{\varepsilon_{   q_1q_2}}  ,
 	\sqrt{\sqrt{\varepsilon_{   q_1}}     \sqrt{\varepsilon_{   q_2}}     \sqrt{\varepsilon_{   2q_1q_2}} },
 	\sqrt{\sqrt{\varepsilon_{   2q_1}}      \sqrt{\varepsilon_{   2q_2}}   \sqrt{\varepsilon_{   2q_1q_2}}} \right\rangle,$$
 	and its class number is odd.
 \end{proposition}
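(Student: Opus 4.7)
The approach is to apply Wada's criterion for unit groups of multiquadratic fields iteratively, combine it with the explicit square-root identities of Lemma \ref{lm expressions of units under cond 1}, and then deduce the parity of $h(\LL)$ from Kuroda's formula.

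First I would compute fundamental systems of units for the seven biquadratic subfields of $\LL$. For each such biquadratic $M$, Wada's criterion writes $E_M$ as generated by the units of its three quadratic subfields together with the square roots of products of the three fundamental units which are squares in $M$. Lemma \ref{lm expressions of units under cond 1} identifies these squares precisely: $2\varepsilon_{q_i}$, $2\varepsilon_{2q_i}$, $2\varepsilon_{2q_1q_2}$ and $\varepsilon_{q_1q_2}$ are squares in the appropriate biquadratic fields, producing the square-root generators $\sqrt{\varepsilon_{q_i}}$, $\sqrt{\varepsilon_{2q_i}}$, $\sqrt{\varepsilon_{2q_1q_2}}$ and $\sqrt{\varepsilon_{q_1q_2}}$, while Lemmas \ref{1:046} and \ref{3:105} exclude the remaining candidates. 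Note that $N(\varepsilon_2)=-1$ so $\varepsilon_2$ enters as a standalone generator with no nested root.

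Next I would view $\LL$ as a biquadratic extension of $\QQ(\sqrt 2)$ with the three intermediate fields $M_1=\QQ(\sqrt 2,\sqrt{q_1})$, $M_2=\QQ(\sqrt 2,\sqrt{q_2})$, $M_3=\QQ(\sqrt 2,\sqrt{q_1q_2})$; their FSUs were determined in the first step, and their seven quadratic subfields jointly exhaust those of $\LL$. Applying Wada at this second level, $E_\LL$ is generated by $E_{M_1}E_{M_2}E_{M_3}$ together with square roots of products of units from the $E_{M_i}$ that become squares in $\LL$. Using the explicit parametrizations $\sqrt{\varepsilon_{q_i}}=(\beta_1+\beta_2\sqrt{q_i})/\sqrt 2$ and $\sqrt{\varepsilon_{2q_1q_2}}=(y_1+y_2\sqrt{2q_1q_2})/\sqrt 2$ (and the analogs from Lemma \ref{lm expressions of units under cond 1}), a direct algebraic manipulation shows that
\[
\sqrt{\varepsilon_{q_1}}\,\sqrt{\varepsilon_{q_2}}\,\sqrt{\varepsilon_{2q_1q_2}}\quad\text{and}\quad\sqrt{\varepsilon_{2q_1}}\,\sqrt{\varepsilon_{2q_2}}\,\sqrt{\varepsilon_{2q_1q_2}}
\]
are squares in $\LL$: the $\sqrt 2$-denominators combine into an integer power, and the remaining polynomial identity follows from multiplying the corresponding Pell-type relations. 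A Galois-signature/norm argument rules out any further candidate, leaving exactly the eight listed generators together with $-1$ as an FSU.

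The oddness of $h(\LL)$ then follows from Kuroda's formula (Lemma \ref{wada's f.}) applied with $n=3$ and $v=9$: each square-root generator in the FSU doubles the unit index $[E_\LL:\prod_{i=1}^7 E_{k_i}]$, and the $2$-parts of the seven quadratic-subfield class numbers are read off from Lemma \ref{class numbers of quadratic field}. Matching the resulting powers of $2$ on both sides of Kuroda's formula yields $h_2(\LL)=1$. The delicate point is the combinatorial verification in the second step: identifying exactly which products of unit square roots become squares in $\LL$, since a missed or spurious product alters the unit index and breaks the final parity count. Pinning this down relies on careful algebra with the Pell identities of Lemma \ref{lm expressions of units under cond 1} together with the quadratic-residue hypothesis $\left(\tfrac{q_1}{q_2}\right)=1$ implicit in that lemma.
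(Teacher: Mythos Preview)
Your overall framework---Wada's method applied to the tower $\QQ(\sqrt 2)\subset M_i\subset \LL$, together with norm/signature restrictions and Kuroda's formula---matches the paper's. The gap is in your positive step: you assert that the products $\sqrt{\varepsilon_{q_1}}\sqrt{\varepsilon_{q_2}}\sqrt{\varepsilon_{2q_1q_2}}$ and $\sqrt{\varepsilon_{2q_1}}\sqrt{\varepsilon_{2q_2}}\sqrt{\varepsilon_{2q_1q_2}}$ are squares in $\LL$ by ``direct algebraic manipulation,'' claiming that ``the remaining polynomial identity follows from multiplying the corresponding Pell-type relations.'' But multiplying the Pell relations $-\beta_1^2+q_i\beta_2^2=2$, etc., only controls norms of these elements; it does not produce an explicit element of $\LL$ whose square equals the product. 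Writing $\sqrt{\varepsilon_{q_1}}\sqrt{\varepsilon_{q_2}}\sqrt{\varepsilon_{2q_1q_2}}=\frac{1}{2\sqrt 2}(\beta_1+\beta_2\sqrt{q_1})(\gamma_1+\gamma_2\sqrt{q_2})(y_1+y_2\sqrt{2q_1q_2})$ and asking that this be a square in $\LL$ is a genuine Diophantine condition on the specific solutions $\beta_i,\gamma_i,y_i$, not a formal consequence of the norm identities. No amount of juggling the $\sqrt 2$-denominators fixes this.

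The paper sidesteps this entirely by reversing the logical order you propose. After the norm computations reduce the unknown exponents to the three possibilities $\xi_1^2,\xi_2^2,\xi_3^2$ (with $\xi_3^2=\xi_1^2\xi_2^2$ up to known squares), the paper does \emph{not} verify directly that any $\xi_i\in\LL$. Instead it plugs the provisional unit index into Kuroda's formula: if none (or only one) of the $\xi_i$ lay in $\LL$, then $q(\LL)\le 2^7$ and the formula $h_2(\LL)=q(\LL)/2^{8}$ would give $h_2(\LL)\le 1/2$, which is absurd. Hence at least two of the $\xi_i$ are in $\LL$, and since any two generate the third, all three are. This simultaneously pins down $E_\LL$ and yields $h_2(\LL)=1$. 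So in the paper the class-number formula \emph{proves} the existence of the extra square roots, rather than being a consequence of it; your proposal has the dependence running the wrong way and leaves the crucial existence step unsupported.
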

 \begin{proof}
 	Without loss of generality we may suppose  that  $\left(\dfrac{	q_1}{	q_2}\right)=1$.
 	Consider the following diagram (Figure \ref{fig:1} below):
 	\begin{figure}[H]
 		$$\xymatrix@R=0.8cm@C=0.3cm{
 			&\LL=\QQ( \sqrt 2,  \sqrt{q_1},  \sqrt{q_2})\ar@{<-}[d] \ar@{<-}[dr] \ar@{<-}[ld] \\
 			L_1=\QQ(\sqrt 2, \sqrt{q_1})\ar@{<-}[dr]& L_2=\QQ(\sqrt 2,  \sqrt{q_2}) \ar@{<-}[d]& L_3=\QQ(\sqrt 2,  \sqrt{q_1q_2})\ar@{<-}[ld]\\
 			&\QQ(\sqrt 2)}$$
 		\caption{Subfields of $\LL/\QQ(\sqrt 2)$}\label{fig:1}
 	\end{figure}

 Put	$ \mathrm{Gal}(\LL/\QQ)=\langle \sigma_1,   \sigma_2,   \sigma_3\rangle$, where
 \begin{center} $\sigma_1(\sqrt{2})=-\sqrt{2}$, \qquad  $\sigma_2(\sqrt{q_1})=-\sqrt{q_1}$, \qquad $\sigma_3(\sqrt{q_2})=-\sqrt{q_2}$,  \\
 	$\sigma_i(\sqrt{2})=\sqrt{2}$ for $i\in\{2,   3\}$,\qquad
 	$\sigma_i(\sqrt{q_1})=\sqrt{q_1}$ for $i\in\{1,   3\}$ and \\
 	$\sigma_i(\sqrt{q_2})=\sqrt{q_2}$ for $i\in\{1,   2\}$.\end{center}
  Hence $L_1$,   $L_2$ and $L_3$ are
 the fixed fields of the subgroups of $\mathrm{Gal}(\LL/\QQ)$ generated respectively by $\sigma_3$,   $\sigma_2$ and $\sigma_2\sigma_3$.
By Lemma \ref{lm expressions of units under cond 1},   we easily infer that
 	 a FSU of $L_1$ is $\{\varepsilon_{  2},  \sqrt{\varepsilon_{   q_1}},  \sqrt{\varepsilon_{   2q_1}}\}$, a
 		 FSU of $L_2$ is $\{\varepsilon_{  2},  \sqrt{\varepsilon_{   q_2}},  \sqrt{\varepsilon_{   2q_2}}\}$ and a
 		 FSU of $L_3$ is $\{\varepsilon_{  2},   \varepsilon_{   q_1q_2} ,  \sqrt{\varepsilon_{   2q_1q_2}}\}$.
 	Thus
 	\begin{eqnarray}\label{eq E1E2E3}
 		E_{L_1}E_{L_2}E_{L_3}=\langle-1, \varepsilon_{  2}, \varepsilon_{   q_1q_2}  ,    \sqrt{\varepsilon_{   q_1}},  \sqrt{\varepsilon_{   2q_1}},   \sqrt{\varepsilon_{   q_2}},  \sqrt{\varepsilon_{   2q_2}},   \sqrt{\varepsilon_{   2q_1q_2}} \rangle.
 	\end{eqnarray}
 	To find a fundamental system of units of $\LL$,  it suffices, as we have said in the beginning of the section, to find elements $\xi$ of $E_{L_1}E_{L_2}E_{L_3}$ which are   squares in $\LL$
 	(cf. page \pageref{remainder on units}). According to Lemma \ref{lm expressions of units under cond 1},  	$\varepsilon_{q_1q_2}$ is a square in $\LL$, then it suffices to look for the  units $\xi$ such that
 	\begin{eqnarray}\label{eq X}
 		\xi^2= \varepsilon_{  2}^a   \sqrt{\varepsilon_{   q_1}}^b \sqrt{\varepsilon_{   2q_1}}^c  \sqrt{\varepsilon_{   q_2}}^d \sqrt{\varepsilon_{   2q_2}}^e  \sqrt{\varepsilon_{   2q_1q_2}}^f.
 	\end{eqnarray}
 	where $a, b, c, d, e$ and $f\in\{0, 1\}$. Let us eliminate  some forms of $\xi^2$ such that $\xi$ can not be in $\LL$. Consider    $L_4=\mathbb{Q}(\sqrt{q_1}, \sqrt{q_2})$,   we will  apply 	the norm     $N_{\LL/L_4}=1+\sigma_1$ to the equation \eqref{eq X}. For this,  by  using Lemma \ref{lm expressions of units under cond 1} we have  the following table:
 	$$\begin{tabular}{ |c|c|c|c|c|c|c|c|}
 	\hline
 	$\varepsilon$&$\varepsilon_{2}$ &$\sqrt{\varepsilon_{q_1}}$&$\sqrt{\varepsilon_{2q_1}}$& $\sqrt{\varepsilon_{q_2}}$&$\sqrt{\varepsilon_{2q_2}}$&$\sqrt{\varepsilon_{2q_1q_2}}$ \\
 	\hline
 	$\varepsilon^{1+\sigma_1}$&$-1$&	${-\varepsilon_{q_1}}$&$1$ &${-\varepsilon_{q_2}}$&$ 1$& $1$ \\
 	\hline
 	\end{tabular}  $$
 	
From which we deduce that
 	\begin{eqnarray*}
 		N_{\LL/L_4}(\xi^2)=N_{\LL/L_4}(\xi)^2&=& (-1)^a\cdot(-1)^b\cdot\varepsilon_{q_1}^b\cdot 1 \cdot (-1)^d \cdot \varepsilon_{q_2}^d \cdot 1\cdot 1, \\
 		&=& (-1)^{a+b+d}\cdot  \varepsilon_{q_1}^b\cdot \varepsilon_{q_2}^d>0.
 	\end{eqnarray*}
 	Thus,  $a+b+d=0\pmod 2$, and since by Lemma \ref{lm expressions of units under cond 1},  $\varepsilon_{q_1}$ and $  \varepsilon_{q_2}$ are not squares in $L_4$ whereas their product is, one deduces that  $b=d$ and $a=0$.
 	Therefore
 	\begin{eqnarray*}
 		\xi^2=    \sqrt{\varepsilon_{   q_1}}^b \sqrt{\varepsilon_{   2q_1}}^c  \sqrt{\varepsilon_{   q_2}}^b \sqrt{\varepsilon_{   2q_2}}^e  \sqrt{\varepsilon_{   2q_1q_2}}^f.
 	\end{eqnarray*}
 	Similarly, we will consider $L_3=\QQ(\sqrt 2,  \sqrt{q_1q_2})$ and apply 	the norm     $N_{\LL/L_3}=1+\sigma_2\sigma_3$. Firstly, we need the following computations:
 	$$\begin{tabular}{ |c|c|c|c|c|c|c|c|c|}
 	\hline
 	 $\varepsilon$&$\varepsilon_2$ &$\sqrt{\varepsilon_{q_1}}$&$\sqrt{\varepsilon_{2q_1}}$& $\sqrt{\varepsilon_{q_2}}$&$\sqrt{\varepsilon_{2q_2}}$&$\sqrt{\varepsilon_{2q_1q_2}}$ \\
 	\hline
 	$\varepsilon^{1+\sigma_2\sigma_3}$  &$\varepsilon_2^2$&$-1$&$-1$&$-1$&$-1$&$\varepsilon_{2q_1q_2}$ \\
 	\hline
 	\end{tabular} $$
 	So	
 	\begin{eqnarray*}
 		N_{\LL/L_3}(\xi^2)&=& (-1)^b\cdot(-1)^c \cdot (-1)^b \cdot (-1)^e  \cdot \varepsilon_{   2q_1q_2}^f, \\
 		&=& (-1)^{b+c+b+e}\cdot  \varepsilon_{   2q_1q_2}^f>0.
 	\end{eqnarray*}
 	Thus all what we can deduce is that $c=e$.  Hence
 	\begin{eqnarray*}
 		\xi^2=    \sqrt{\varepsilon_{   q_1}}^b \sqrt{\varepsilon_{   2q_1}}^c  \sqrt{\varepsilon_{   q_2}}^b \sqrt{\varepsilon_{   2q_2}}^c  \sqrt{\varepsilon_{   2q_1q_2}}^f.
 	\end{eqnarray*}
 	Similarly, we consider $L_5=\mathbb{Q}(\sqrt{q_1}, \sqrt{2q_2})$ and we apply $N_{\LL/L_5}=1+\sigma_1\sigma_3$. By Lemma \ref{lm expressions of units under cond 1} we have
 	$$\begin{tabular}{ |c|c|c|c|c|c|c|c|c|}
 	\hline
 	 $\varepsilon$& $\varepsilon_2$&$\sqrt{\varepsilon_{q_1}}$&$\sqrt{\varepsilon_{2q_1}}$& $\sqrt{\varepsilon_{q_2}}$&$\sqrt{\varepsilon_{2q_2}}$&$\sqrt{\varepsilon_{2q_1q_2}}$ \\
 	\hline
 	$1+\sigma_1\sigma_3$ &$-1$ &$-\varepsilon_{q_1}$&$1$&$1$&$-\varepsilon_{2q_2}$&$-\varepsilon_{2q_1q_2}$ \\
 	\hline
 	\end{tabular}$$
 	Then
 	\begin{eqnarray*}
 		N_{\LL/L_5}(\xi^2)&=&   (-1)^b\cdot\varepsilon_{q_1}^b\cdot 1 \cdot 1 \cdot (-1)^c\cdot \varepsilon_{2q_2}^c   \cdot(-1)^f\cdot \varepsilon_{   2q_1q_2}^f, \\
 		&=& (-1)^{ b+c+f}\cdot\varepsilon_{q_1}^b\cdot \varepsilon_{2q_2}^c\cdot \varepsilon_{   2q_1q_2}^f>0.
 	\end{eqnarray*}
 	Thus all what we can deduce is 	that $ b+c+f\equiv 0\pmod 2$. Applying the norm maps on all other subextensions of $\LL$ we deduce no more information  on  $b, c, e$ and $f$.
 	On the other hand,   by  Lemmas \ref{wada's f.} and \ref{class numbers of quadratic field} we have
 	\begin{eqnarray}
 		h_2(\LL)&=&\frac{1}{2^{9}}q(\LL)  h_2(2) h_2(q_1) h_2(2q_1)h_2(q_2) h_2(2q_2)h(q_1q_2)  h_2(2q_1q_2)\nonumber \\
 		&=&\frac{1}{2^{9}}\cdot q(\LL)\cdot 1\cdot 1 \cdot 1  \cdot 1 \cdot 1 \cdot 1 \cdot 2= \frac{1}{2^{8}}\cdot q(\LL).\label{frm wadas formula}
 	\end{eqnarray}
 	If we suppose that   $b=c=f=0$, then there is no element of $E_{L_1}E_{L_2}E_{L_3}$ which is a square in $\LL$ except ${\varepsilon_{q_1q_2}}$. So
 	$q(\LL)=2^6$.  Hence formula  \ref{frm wadas formula}, implies that $h_2(\LL)=\frac{1}{2^{8}}\cdot 2^6=\frac{1}{2^{4}}$.
 	Which is absurd. Thus the case  $b=c=f=0$ is impossible.  Hence $\xi^2$ can take one of the following forms:
 	\begin{eqnarray} \label{ from 1 of xi}
 		\xi_1^2=    \sqrt{\varepsilon_{   q_1}}     \sqrt{\varepsilon_{   q_2}}     \sqrt{\varepsilon_{   2q_1q_2}}\quad  (\text{i.e.,  } c=0 \text{ and } b=f=1),
 	\end{eqnarray}
 	
 	\begin{eqnarray} \label{ from 2 of xi}
 		\xi_2^2=      \sqrt{\varepsilon_{   2q_1}}      \sqrt{\varepsilon_{   2q_2}}   \sqrt{\varepsilon_{   2q_1q_2}}\quad   (\text{i.e.,  } b=0 \text{ and } c=f=1),
 	\end{eqnarray}
 	
 	\begin{eqnarray} \label{ from 3 of xi}
 		\xi_3^2=    \sqrt{\varepsilon_{   q_1}}  \sqrt{\varepsilon_{   2q_1}}   \sqrt{\varepsilon_{   q_2}}  \sqrt{\varepsilon_{   2q_2}}\quad    (\text{i.e.,  } f=0 \text{ and } c=b=1).
 	\end{eqnarray}

 	Remark that by the equalities \eqref{eq E1E2E3} and \eqref{frm wadas formula} at least two   equations from \eqref{ from 1 of xi},  \eqref{ from 2 of xi} and \eqref{ from 3 of xi} are verified for some $\xi \in \LL$ (elsewhere,  we get a contradiction as for the case $b=c=f=0$).
 	Remark also that if two of  the forms in the equations \eqref{ from 1 of xi},  \eqref{ from 2 of xi} and \eqref{ from 3 of xi},  are verified for some $\xi_i$ and $\xi_j\in \LL$ ($i\not= j$),
 	then by their product we may deduce that the third form is also verified for some $\xi_k\in \LL$. So the result.	
 \end{proof}

\noindent We close this section by the following  result on class numbers of some number fields.
   \begin{lemma}\label{lm class number of the 1st step is odd}
 	Let     $q_1\equiv q_2\equiv 3\pmod 8$ be two distinct primes. Then
 	\begin{enumerate}[\rm 1.]
 		\item The class numbers of $F= \mathbb{Q}(\sqrt{q_1},   \sqrt{q_2},   i)$ and
 		$ F^+=\mathbb{Q}(\sqrt{q_1},   \sqrt{q_2})$ are odd.
 		\item The $2$-class number of $K=\mathbb{Q}(\sqrt{-q_1}, \sqrt{q_2}, \sqrt{2})$ equals  $h_2(-2q_1q_2)$.
 	\end{enumerate}
 	 	\end{lemma}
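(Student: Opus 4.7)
The plan is to apply Kuroda's class number formula (Lemma \ref{wada's f.}) to each of the three multiquadratic fields, having first pinned down their fundamental systems of units via Wada's description, Lemma \ref{lm expressions of units under cond 1}, and (for $F$) Azizi's Lemma \ref{Lemme azizi}; the values from Lemma \ref{class numbers of quadratic field} then feed directly into the formula.

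For $F^+=\mathbb{Q}(\sqrt{q_1},\sqrt{q_2})$, I read from Lemma \ref{lm expressions of units under cond 1}(2)(b) that $\sqrt{\varepsilon_{q_1q_2}}\in F^+$, and from (4)(b) that $2\varepsilon_{q_i}$ is a square in $\mathbb{Q}(\sqrt{q_i})$, so $4\varepsilon_{q_1}\varepsilon_{q_2}$ is a square in $F^+$ and $\sqrt{\varepsilon_{q_1}\varepsilon_{q_2}}\in F^+$. Using norm-map eliminations as in Proposition \ref{lemma on units } (supplemented by an ideal-factorization check ruling out that $\varepsilon_{q_i}$ alone is a square in $F^+$, since the class number of $\mathbb{Q}(\sqrt{q_1})$ is odd and the prime above $q_2$ there is not a square ideal), I obtain the FSU $\{\varepsilon_{q_1},\sqrt{\varepsilon_{q_1}\varepsilon_{q_2}},\sqrt{\varepsilon_{q_1q_2}}\}$, hence $q(F^+)=4$, and Kuroda's formula with $v=2$ gives $h_2(F^+)=\tfrac{1}{4}\cdot 4=1$. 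To pass to $F=F^+(i)$, I apply Azizi's Lemma with $n_0=2$ and the choice $\varepsilon=\varepsilon_{q_1}$ (legitimate since $2\varepsilon_{q_1}$ is a square in $F^+$), producing the FSU $\{\sqrt{\varepsilon_{q_1}\varepsilon_{q_2}},\sqrt{\varepsilon_{q_1q_2}},\sqrt{i\varepsilon_{q_1}}\}$. A $3\times 3$ determinant computation in the free part gives $q(F)=8$; since the product of the $2$-parts of the class numbers of the seven quadratic subfields of $F$ equals $h_2(-q_1q_2)=4$, Kuroda's formula with $v=5$ yields $h_2(F)=\tfrac{1}{32}\cdot 8\cdot 4=1$.

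For part 2, let $K^+=\mathbb{Q}(\sqrt{q_2},\sqrt 2)$ be the maximal real subfield of $K$. By Lemma \ref{lm expressions of units under cond 1}(3)(b) and (4)(b), both $\sqrt{\varepsilon_{q_2}}$ and $\sqrt{\varepsilon_{2q_2}}$ lie in $K^+$, so a FSU of $K^+$ is $\{\varepsilon_2,\sqrt{\varepsilon_{q_2}},\sqrt{\varepsilon_{2q_2}}\}$ and $q(K^+)=4$. The main obstacle is to prove that the Hasse unit index $Q_K=[E_K:W_KE_{K^+}]$ equals $1$, which will force $q(K)=q(K^+)=4$. If instead $Q_K=2$, then there exists $\eta\in E_K$ with $\sigma(\eta)=-\eta$ for $\sigma$ generating $\mathrm{Gal}(K/K^+)$; writing $\eta=\beta\sqrt{-q_1}$ with $\beta\in K^+$ and using $q_1\beta^2\in E_{K^+}$, passing to ideals forces $(q_1)$ to be the square of a principal ideal of $K^+$. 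To exclude this, I observe that $q_1\equiv 3\pmod 8$ gives $\left(\tfrac{2}{q_1}\right)=-1$, so $q_1$ is inert in $\mathbb{Q}(\sqrt 2)$; the unique prime above $q_1$ splits in $K^+$ (every element of $\mathbb{F}_{q_1}^\ast$ being a square in $\mathbb{F}_{q_1^2}^\ast$), whence $(q_1)=\mathfrak{p}_1\mathfrak{p}_2$ is a product of two distinct primes of residue degree $2$, and such an ideal is not the square of any ideal.

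Once $q(K)=4$ is in hand, Kuroda's formula with $v=5$, together with the values $h_2(-q_1q_2)=4$ and $h_2(-2q_1)=2$ from Lemma \ref{class numbers of quadratic field}, yields
\[
h_2(K)=\tfrac{1}{32}\cdot 4\cdot 1\cdot 1\cdot 1\cdot 4\cdot 2\cdot 1\cdot h_2(-2q_1q_2)=h_2(-2q_1q_2),
\]
as desired. Outside the $Q_K=1$ step, the remaining verifications are routine Wada--Kuroda bookkeeping.
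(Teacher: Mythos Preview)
Your proposal is correct and follows essentially the same route as the paper: determine a FSU of $F^+$, lift to $F$ via Lemma~\ref{Lemme azizi}, determine a FSU of $K^+$, compute the unit indices, and plug everything into Kuroda's formula together with Lemma~\ref{class numbers of quadratic field}. The one point where you deviate is the verification that $Q_K=1$: the paper checks directly (invoking \cite[Proposition~3]{azizunints99}) that no product $q_1\varepsilon$ with $\varepsilon$ ranging over the FSU of $K^+$ is a square in $K^+$, whereas you argue more intrinsically that $(q_1)=\mathfrak{p}_1\mathfrak{p}_2$ with $\mathfrak{p}_1\neq\mathfrak{p}_2$ in $K^+$ and hence cannot be a square ideal---both arguments are valid and yield the same conclusion.
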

 \begin{proof}
 \begin{enumerate}[\rm 1.]
 	\item   	Note first that  by Lemma \ref{lm expressions of units under cond 1}, we have $ \sqrt{2\varepsilon_{q_1}}$,    $\sqrt{2\varepsilon_{q_2}}\in F^+$,    so  $ \sqrt{ \varepsilon_{q_1} \varepsilon_{q_2}}\in F^+$ and $ \sqrt{   \varepsilon_{q_i}}\not\in F^+$. Thus    a fundamental system of unit  of $F^+$ is
 	$\{\varepsilon_{q_1},    \sqrt{\varepsilon_{q_1}\varepsilon_{q_2}},    \sqrt{\varepsilon_{q_1q_2}}\}$  and  that of $F$ is $\{   \sqrt{\varepsilon_{q_1}\varepsilon_{q_2}},    \sqrt{\varepsilon_{q_1q_2}},  \sqrt{i\varepsilon_{q_1}}\}$ (see    Lemma \ref{Lemme azizi}). Therefore,
 	 $q(F)=2^3$. Thus     by Lemmas  \ref{wada's f.} and \ref{class numbers of quadratic field}  we obtain
 	\begin{eqnarray*}
 		h_2(F)&=&\frac{1}{2^5}q(F)h_2(q_1)h_2(q_2)h_2(-q_1)h_2(-q_2)h_2(q_1q_2)h_2(-q_1q_2)h_2(-1),   \\
 		&=&\frac{1}{2^5}q(F)\cdot 4 =1.
 	\end{eqnarray*}
   We  similarly prove  that the $2$-class number of $F^+$, is odd. Hence the first assertion of the lemma.
 	\item For $K=\mathbb{Q}(\sqrt{-q_1}, \sqrt{q_2}, \sqrt{2})$, by Lemma \ref{lm expressions of units under cond 1}, we have $\{\varepsilon_{  2}, \sqrt{\varepsilon_{  q_2}},  \sqrt{\varepsilon_{  2q_2}}\}$ is a FSU of
 	$K^+=\mathbb{Q}(  \sqrt{q_2},\sqrt{2})$. One can check  easily that $q_1\varepsilon$ is not a square in  $K^+$, for all $ \varepsilon\in   \{\varepsilon_{  2}^i \sqrt{\varepsilon_{  q_1}}^j  \sqrt{\varepsilon_{  2q_2}}^k, \text{ with } i, j\text{ and } k\in\{0,1\}\}$.  It follows by   \cite[Proposition  3]{azizunints99}, that $Q_K=1$. Thus $q(K)=4$.
 	  So using  Lemmas  \ref{wada's f.} and \ref{class numbers of quadratic field} we get
	
 	$\begin{array}{ll}
 	h_2(K)&=\frac{1}{2^{5}}q(K)   h_2(-q_1) h_2(q_2)  h_2(-2q_1) h_2(2q_2)    h_2(-q_1q_2)   h_2(-2q_1q_2) h_2(2)   \\
 	&=\;\frac{1}{2^{5}}\cdot 4\cdot   1 \cdot 1 \cdot 2 \cdot 1 \cdot 4 \cdot h_2(-2q_1q_2)\cdot 1, \\
 	&= h_2(-2q_1q_2).
 	\end{array}$
 \end{enumerate}	
Which completes the proof.
 \end{proof}

 \section{\textbf{Some Iwasawa theory results}}
Let $K$ be a number field and $p$ a prime number. 	Let 	$K_\infty$ be a   $\mathbb Z_p$-extension of $K$.\label{ zp extension}
 Denote by  $K_n$, with $n\geq0$ an integer, the  unique cyclic subfield of $K_\infty$ of degree $p^n$ over $K$, i.e. the $n$-th	layer of the $\mathbb Z_p$-extension of $K$.
In \cite{iwasawa59},    Iwasawa proved that for $n$  sufficiently large,  the highest power of $p$ dividing the class number of $K_n$ is $p^{\lambda_p n+\mu_p p^n+\nu_p}$
for some integers $\lambda_p$,   $\mu_p\geq 0$ and  $\nu_p$,  all independent of $n$,    called the Iwasawa invariants of $K_\infty$. Later,  for abelian number fields,   Ferrero and    Washington proved that $\mu_p=0$ (cf. \cite{ferrerowashington}). Denote by  $A_p(K_n)$ (or   simply $A_n$) the $p$-Sylow subgroup of the ideal class group of $K_n$, and by  $L(K_\infty)$ (resp. $L(K_n)$)  the Hilbert $p$-class field of $K_\infty$ (resp. $K_n$), then $X=\mathrm{Gal}(L(K_\infty)/K_\infty)=\displaystyle\lim_{\longleftarrow}A_p(K_n)$ is  a module over the formal power series ring $\Lambda=\mathbb{Z}_p[[T]]$ (here the action of $1 + T$ is defined by  an inner automorphism induced
from a fixed topological generator of $\mathrm{G}(K_\infty/K)$). For  $m \geq n \geq0$, put $\omega_n =(1+T)^{p^n}-1\in \Lambda$, and $\nu_{m, n}=\omega_m/\omega_n\in \Lambda$.

 For $p=2$,  the tower
$K\subset K_1=K(\sqrt{2} )\subset K_2=K(\sqrt{2+\sqrt{2}})\subset
\cdots$
 defines a $\mathbb Z_2$-extension called the cyclotomic $\mathbb Z_2$-extension of $K$ of layers $K_i$. We need the following lemmas.
 \begin{lemma}[\cite{chemsZkhnin2}]\label{lm deco to 2}
 	Let $n\geq1$ and    $q$  a  prime such that $q\equiv3  \pmod 8$. Then $q$ decomposes into the product of  $2$ prime ideals of $K_n=\mathbb Q{(\zeta_{2^{n+2}})}$ while it is inert in $K_n^+$.
 \end{lemma}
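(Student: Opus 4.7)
The plan is to reduce the claim to a short group-theoretic computation via the cyclotomic reciprocity law. Since $q$ is odd, it is unramified in $K_n=\mathbb{Q}(\zeta_{2^{n+2}})$, and its Frobenius corresponds to the class of $q$ in $(\mathbb{Z}/2^{n+2}\mathbb{Z})^*\simeq \mathrm{Gal}(K_n/\mathbb{Q})$. The residue degree of any prime above $q$ in $K_n$ (respectively $K_n^+$) equals the order of this Frobenius in the relevant Galois group, and the number of primes above $q$ is then obtained by dividing the degree of the extension by this residue degree.

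The key ingredient is the standard decomposition, valid for $n\geq 1$,
$$(\mathbb{Z}/2^{n+2}\mathbb{Z})^* \;=\; \langle -1\rangle\times \langle 5\rangle,$$
where $\langle -1\rangle$ has order $2$ and $\langle 5\rangle$ is cyclic of order $2^n$. Complex conjugation generates $\langle -1\rangle$, so $\mathrm{Gal}(K_n^+/\mathbb{Q})$ is identified with the second factor $\langle 5\rangle$. Writing the class of $q$ as $(-1)^a 5^b$ and reducing modulo $8$ (where $-1\equiv 7$ and $5\equiv 5$), the hypothesis $q\equiv 3\pmod 8$ forces $a=1$ and $b$ odd.

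With $q\equiv -5^b\pmod{2^{n+2}}$ and $\gcd(b,2^n)=1$, the image of $q$ in $\langle 5\rangle\simeq \mathbb{Z}/2^n\mathbb{Z}$ is $5^b$, of order $2^n=[K_n^+:\mathbb{Q}]$, so $q$ is inert in $K_n^+$. In the full group, the order of $q=-5^b$ is $\mathrm{lcm}(2,2^n)=2^n$, so the residue degree in $K_n$ is also $2^n$ and the number of primes above $q$ is $[K_n:\mathbb{Q}]/2^n = 2^{n+1}/2^n = 2$. I anticipate no serious obstacle: the whole proof reduces to the structure theorem for $(\mathbb{Z}/2^{n+2}\mathbb{Z})^*$ together with the residue class of $q$ modulo $8$, and the base case $n=1$ already exhibits the correct order $2$ (consistent with $q^2\equiv 1\pmod 8$).
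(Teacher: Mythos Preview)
Your argument is correct. The cyclotomic reciprocity law together with the standard decomposition $(\mathbb{Z}/2^{n+2}\mathbb{Z})^*=\langle -1\rangle\times\langle 5\rangle$ for $n\ge 1$ is exactly the right tool, and your reduction modulo $8$ correctly pins down $q\equiv -5^b$ with $b$ odd, from which both assertions follow at once.

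As for the comparison: the paper does not supply a proof of this lemma at all; it is quoted from \cite{chemsZkhnin2} with no argument given. Your proof is therefore a self-contained and entirely standard verification of a result the authors chose to import. One minor stylistic remark: the sentence ``I anticipate no serious obstacle'' and the closing commentary about the base case read more like working notes than a finished proof; once you have computed the order of $-5^b$ to be $2^n$, the proof is complete and those lines can be dropped.
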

  \begin{lemma}[\cite{chemsZkhnin2}]\label{lm parity of class number}
 	Let  $q$  be  prime such that $q\equiv3  \pmod 8$. Then  the class number of $\mathbb Q{(\zeta_{2^{n+2}},  \sqrt{q})}$ is odd.
 \end{lemma}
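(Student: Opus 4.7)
The plan is to reduce the claim to the base case $n=0$ via Iwasawa theory. Let $k = \mathbb{Q}(i,\sqrt{q})$, so that the fields $K_n = \mathbb{Q}(\zeta_{2^{n+2}},\sqrt{q})$ constitute the cyclotomic $\mathbb{Z}_2$-extension of $k$. The strategy has two steps: first, compute directly that $h_2(k)=1$; second, show that the Iwasawa module $X = \mathrm{Gal}(L(K_\infty)/K_\infty)$ vanishes, which then forces $A_n = 0$ for every $n \geq 0$ and in particular for $n\geq 1$, as claimed.

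For the base case I would apply Lemma \ref{Lemme azizi} to $k = \mathbb{Q}(\sqrt{q})(i)$ with $n_0 = 2$ and $\mu_2 = 0$. The existence condition reduces to asking whether $2\varepsilon_q$ is a square in $\mathbb{Q}(\sqrt{q})$, and for $q\equiv 3 \pmod 8$ this is the single-prime analogue of Lemma \ref{lm expressions of units under cond 1}(4), which produces $\sqrt{2\varepsilon_q} = \beta_1 + \beta_2\sqrt{q}$ after ruling out the competing factorizations of the norm relation $\alpha^2 - 1 = q\beta^2$ by the Legendre-symbol arguments used in the proof of that lemma. Hence $\{\sqrt{i\varepsilon_q}\}$ is a FSU of $k$, giving $q(k) = 2$. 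Combining this with Kuroda's class number formula (Lemma \ref{wada's f.}, $v = 1$ in the imaginary biquadratic case) and with $h_2(\mathbb{Q}(i)) = h_2(\mathbb{Q}(\sqrt{q})) = h_2(\mathbb{Q}(\sqrt{-q})) = 1$ from Lemma \ref{class numbers of quadratic field}, one obtains
\[
h_2(k) = \tfrac{1}{2}\,q(k)\,h_2(\mathbb{Q}(i))\,h_2(\mathbb{Q}(\sqrt{q}))\,h_2(\mathbb{Q}(\sqrt{-q})) = \tfrac{1}{2}\cdot 2 = 1.
\]

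For the descent, I would use the following splitting analysis: $q\equiv 3\pmod 8$ makes $2$ ramify in $\mathbb{Q}(i)$ and in $\mathbb{Q}(\sqrt{q})$, while $-q\equiv 5\pmod 8$ makes $2$ inert in $\mathbb{Q}(\sqrt{-q})$. This forces $e=f=2$ and $g=1$ for the prime $2$ in $k$, so $k$ admits a unique prime above $2$, and this prime is totally ramified in $K_\infty/k$ because the cyclotomic $\mathbb{Z}_2$-extension of $\mathbb{Q}$ is totally ramified at $2$. Under these hypotheses the standard isomorphism $X/TX \cong A_0$ applies, and since $A_0 = 0$ by the first step, topological Nakayama on the finitely generated $\Lambda = \mathbb{Z}_2[[T]]$-module $X$ yields $X = 0$. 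Therefore $A_n \cong X/\omega_n X = 0$ for all $n \geq 0$, proving that $h(K_n)$ is odd.

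The main obstacle is the base case computation of $q(k)$, which hinges on the subtle congruence-theoretic fact that $2\varepsilon_q$ is a square in $\mathbb{Q}(\sqrt{q})$ when $q\equiv 3\pmod 8$; extracting this from the Pell-type equation $\alpha^2 - 1 = q\beta^2$ requires ruling out the other possible factorizations via the values of $\left(\frac{2}{q}\right)$ and $\left(\frac{-2}{q}\right)$, exactly the bookkeeping carried out in the proof of Lemma \ref{lm expressions of units under cond 1}. Once that is in hand and the unique-prime-over-$2$ observation is made, the Iwasawa descent step is essentially formal.
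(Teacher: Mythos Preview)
The paper does not supply its own proof of this lemma: it is quoted verbatim from \cite{chemsZkhnin2}, so there is no argument here to compare against. Your proposal is nevertheless a complete and correct proof. The computation of $h_2(\mathbb{Q}(i,\sqrt{q}))=1$ via Lemma~\ref{Lemme azizi}, item~4 of Lemma~\ref{lm expressions of units under cond 1}, and Kuroda's formula is exactly right, and the observation that $2$ lies under a single (totally ramified) prime of $k=\mathbb{Q}(i,\sqrt{q})$ is the key structural fact that makes the Nakayama argument go through: with only one ramified prime, $A_0\cong X/TX$, so $A_0=0$ forces $X=0$ and hence $A_n=0$ for every $n$.

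Your route is in fact in the same spirit as the paper's proof of Lemma~\ref{lm C(F_n) is cycli}, which likewise bootstraps information about a low layer through the cyclotomic $\mathbb{Z}_2$-tower. One minor remark: instead of invoking topological Nakayama on $X$ you could equally well cite Fukuda's order-stability result (the companion to Lemma~\ref{lm fukuda} for class numbers rather than ranks), since $n_0=0$ works here; but the module-theoretic argument you gave is cleaner and requires no auxiliary computation at level $n=1$.
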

\noindent Let $rank_2(G)$ denote the $2$-rank of a group $G$.
 \begin{lemma}[\cite{fukuda}]\label{lm fukuda}
	Let $K/k$ be a $\mathbb{Z}_2$-extension,  $k_n$ its    n-th layer  and $n_0$  an integer such that any prime of $K$ which is ramified in $K/k$ is totally ramified in $K/k_{n_0}$.
	%	\begin{enumerate}[\rm 1.]
	%\item If there exists an integer $n\geq n_0$ such that such that $h_2(k_n)=h_2(k_{n+1})$,  then $h_2(k_n)=h_2(k_{m})$ for all $m\geq n$.
	%\item
	If there exists an integer $n\geq n_0$ such that $rank_2(\mathrm{Cl}(k_n))= rank_2( \mathrm{Cl}(k_{n+1}))$,  then
	$rank_2(\mathrm{Cl}(k_{m}))= rank_2(\mathrm{Cl}(k_{n}))$ for all $m\geq n$.
	%\end{enumerate}
\end{lemma}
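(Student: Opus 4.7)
My plan is to reduce the rank-stability statement to a Nakayama-type argument on a finitely generated $\mathbb{F}_2[[T]]$-module; the key preliminary ingredient is the standard Iwasawa-theoretic description of the layer class groups as quotients of the projective limit.

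Let $\Lambda=\mathbb{Z}_2[[T]]$ and let $X=\mathrm{Gal}(L/K)=\varprojlim \mathrm{Cl}_2(k_n)$, where $L$ is the maximal unramified abelian pro-$2$-extension of $K$; this is a finitely generated $\Lambda$-module. The total ramification hypothesis (every prime of $K$ ramified in $K/k$ is totally ramified above $k_{n_0}$) is exactly what is needed to produce, via class field theory and a careful analysis of the inertia subgroups of the ramified primes, a fixed $\Lambda$-submodule $Y\subseteq X$ and canonical isomorphisms
\[
\mathrm{Cl}_2(k_n)\;\cong\;X/\nu_{n,n_0}\,Y\qquad\text{for all }n\geq n_0.
\]
I expect this identification to be the main technical point of the proof; once it is in hand the rest is short. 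Reducing modulo $2$ and using the characteristic-$2$ congruences $\omega_n\equiv T^{2^n}\pmod 2$ and $\nu_{n,n_0}\equiv T^{2^n-2^{n_0}}\pmod 2$, I set $\bar Y=Y/2Y$ and obtain
\[
\mathrm{Cl}_2(k_n)/2\,\mathrm{Cl}_2(k_n)\;\cong\;\bar Y\big/T^{2^n-2^{n_0}}\bar Y,
\]
whose $\mathbb{F}_2$-dimension is exactly $rank_2(\mathrm{Cl}(k_n))$.

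The hypothesis $rank_2(\mathrm{Cl}(k_n))=rank_2(\mathrm{Cl}(k_{n+1}))$ forces the natural surjection $\bar Y/T^{2^{n+1}-2^{n_0}}\bar Y\twoheadrightarrow \bar Y/T^{2^n-2^{n_0}}\bar Y$ to be an isomorphism, and hence $T^{2^n-2^{n_0}}\bar Y=T^{2^{n+1}-2^{n_0}}\bar Y$. Writing $V=T^{2^n-2^{n_0}}\bar Y$, this is the equation $V=T^{2^n}V$; iterating yields $V=T^{k\cdot 2^n}V$ for every $k\geq 0$. For any $m\geq n$ one has $2^m-2^n=2^n(2^{m-n}-1)$, a multiple of $2^n$, so
\[
T^{2^m-2^{n_0}}\bar Y \;=\; T^{2^m-2^n}V \;=\; V \;=\; T^{2^n-2^{n_0}}\bar Y.
\]
Consequently $\mathrm{Cl}_2(k_m)/2\,\mathrm{Cl}_2(k_m)\cong \mathrm{Cl}_2(k_n)/2\,\mathrm{Cl}_2(k_n)$, and therefore $rank_2(\mathrm{Cl}(k_m))=rank_2(\mathrm{Cl}(k_n))$ for every $m\geq n$, as required. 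The only real work is in the very first step; everything afterwards is the short $\mathbb{F}_2[[T]]$-module calculation just performed.
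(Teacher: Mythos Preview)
The paper itself gives no proof of this lemma; it is simply quoted from Fukuda's note, so there is no in-paper argument to compare yours against. Your outline is in fact essentially Fukuda's own strategy: pass to the Iwasawa module $X$, use the description of the layer class groups as quotients of $X$ available once everything is totally ramified above $k_{n_0}$, reduce modulo $2$, and finish with a Nakayama-type stabilisation over $\mathbb{F}_2[[T]]$.

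There is, however, a genuine slip in your reduction modulo $2$. From $\mathrm{Cl}_2(k_n)\cong X/\nu_{n,n_0}Y$ one gets
\[
\mathrm{Cl}_2(k_n)\big/2\,\mathrm{Cl}_2(k_n)\;\cong\;\overline{X}\big/T^{\,2^n-2^{n_0}}\,\widetilde{Y},
\]
where $\overline{X}=X/2X$ and $\widetilde{Y}$ is the \emph{image of $Y$ in $\overline{X}$} (that is, $Y/(Y\cap 2X)$), not $Y/2Y$; and the numerator must be $\overline{X}$, not $\overline{Y}$. As you wrote it, the displayed isomorphism is false whenever $Y\subsetneq X$, which is the generic situation (already in the single-ramified-prime case one has $Y=TX$). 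With this correction everything you do afterwards goes through unchanged: equality of the finite $\mathbb{F}_2$-dimensions forces $T^{\,2^n-2^{n_0}}\widetilde{Y}=T^{\,2^{n+1}-2^{n_0}}\widetilde{Y}$ inside $\overline{X}$, and your iteration $V=T^{2^n}V\Rightarrow V=T^{k\cdot 2^n}V$ together with $2^n\mid(2^m-2^n)$ gives the conclusion. So the idea is right, but the key displayed formula needs to be repaired before the argument is complete.
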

 % Let us recall following formula of Kida.
 \begin{lemma}\cite{kida}\label{Kida}
 	Let $K$ and $F$ be   $CM$-fields and $F/K$ a finite $2$-extension. Assume that $\mu^-(K)=0$, then $\mu^-(F)=0$ and
 	\[\lambda^-(F)-\delta(F)=[F_{\infty}:K_{\infty}]\left(\lambda^-(K)-\delta(K)\right)+\sum (e_{\beta}-1)-\sum(e_{\beta^+}-1), \]
 	where $\delta(k)=1$ or $0$ according to whether  $K_\infty$ contains the fourth roots of unity or not,  $e_{\beta}$ is the ramification index of a prime $\beta$ in $F_{\infty}/K_{\infty}$ coprime to $2$ and $e_{\beta^+}$ is the ramification index of a prime coprime to $2$ in $F_{\infty}^+/K_{\infty}^+$.
 \end{lemma}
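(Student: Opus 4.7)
The statement is Kida's formula in its standard form for CM towers. Since the result is cited from \cite{kida}, I would follow the now-classical Iwasawa-theoretic proof (which is the $p$-adic analog of the Riemann--Hurwitz formula for curves) rather than Kida's original Galois-module argument. The strategy is a reduction to the cyclic degree-$2$ case, followed by a cohomological computation of the $\mathbb{Z}_2$-rank of the minus part of the unramified Iwasawa module.

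First I would reduce to $[F:K]=2$. Any finite $2$-extension $F/K$ admits a subnormal series $K=F_0\subset F_1\subset\cdots\subset F_r=F$ with each step cyclic of order $2$, and both sides of Kida's formula behave multiplicatively along such a tower: the factor $[F_\infty:K_\infty]$ composes, and the ramification sums $\sum(e_\beta-1)-\sum(e_{\beta^+}-1)$ are Euler-characteristic contributions that telescope correctly (using that a prime ramified in $F_\infty/K_\infty$ that is unramified in an intermediate step contributes only at the step where ramification appears). The vanishing $\mu^-(F)=0$ is likewise preserved step by step.

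Now assume $G=\mathrm{Gal}(F_\infty/K_\infty)$ is cyclic of order $2$. Let $Y=X^-(F_\infty)$ and $Y_K=X^-(K_\infty)$, regarded as finitely generated $\Lambda$-modules with commuting $G$-action. The hypothesis $\mu^-(K)=0$ gives $\mathrm{rank}_{\mathbb{Z}_2}Y_K=\lambda^-(K)$. The core computation is the short exact sequence
\[
0 \longrightarrow Y^G/(\text{genus terms}) \longrightarrow Y_K \longrightarrow (\text{coker of norm}) \longrightarrow 0,
\]
obtained by applying class field theory to $F_\infty/K_\infty$: the kernel and cokernel of the norm map $Y\to Y_K$ are described in terms of the inertia subgroups $I_\beta\subset G$ of primes $\beta$ of $K_\infty$ ramifying in $F_\infty$. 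Restricting to the minus part forces contributions only from primes coprime to $2$ (primes above $2$ are handled by the plus-part subtraction, and finite primes above $2$ contribute equally to $Y$ and $Y^+$). Taking the Herbrand quotient of $Y$ as a $\mathbb{Z}_2[G]$-module, together with the fact that $Y$ has no nontrivial $\mathbb{Z}_2$-torsion fixed by $\Gamma^{p^n}$ for large $n$ (whence $\mu^-(F)=0$), one arrives at
\[
\mathrm{rank}_{\mathbb{Z}_2}Y \;=\; 2\,\mathrm{rank}_{\mathbb{Z}_2}Y_K + \sum_{\beta\nmid 2}(e_\beta-1) - \sum_{\beta^+\nmid 2}(e_{\beta^+}-1),
\]
which is the claimed formula once one translates ranks to $\lambda^-$ and accounts for the $\delta$ shift.

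The hard part is justifying the $\delta$ correction and the passage from $Y$ to $Y^-$. The term $\delta(K)$ arises because, when $K_\infty$ contains $\mu_{2^\infty}$, the Iwasawa module picks up an extra free rank-one summand coming from the cyclotomic character (the Tate twist $\mathbb{Z}_2(1)$), and this summand is fixed under $G$, so it contributes to $\lambda^-(F)$ exactly as $[F_\infty:K_\infty]\,\delta(K)$ but must be removed on the $F$-side as $\delta(F)$; this is why the corrected quantity $\lambda^--\delta$ is the right invariant to compare. Separating this summand requires the careful analysis of the plus/minus decomposition under complex conjugation $c$, using that $c$ acts on $\mathbb{Z}_2(1)$ by $-1$. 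Beyond this, one must verify that $\mu^-(F)=0$ is a genuine consequence (not just an assumption), which uses Iwasawa's descent theorem combined with the finiteness of ramified primes — this step is what makes the reduction to the cyclic case legitimate and is the principal technical content of the lemma.
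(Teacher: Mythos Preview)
The paper does not prove this lemma at all: it is stated with a bare citation to Kida's article \cite{kida}, with no argument given. So there is nothing in the paper to compare your proof against line by line.

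That said, your sketch follows the Iwasawa--Sinnott approach (the $p$-adic Riemann--Hurwitz analogy via Herbrand quotients), which is actually \emph{not} Kida's original proof in \cite{kida}; Kida argued more directly by studying the Galois-module structure of $X^-$ under the extension. Your reduction to cyclic degree~$2$ and the cohomological computation of $\mathbb{Z}_2$-ranks is the right architecture, and the telescoping of the ramification terms along a subnormal series is standard. One point is imprecise: your explanation of the $\delta$ correction is off. The shift by $\delta$ does not come from an extra $\mathbb{Z}_2(1)$-summand sitting inside $X^-$; rather, it enters through the global units (specifically the $2$-power roots of unity) in the class-field-theoretic exact sequence linking $X^-(F_\infty)$ to $X^-(K_\infty)$ --- the Herbrand quotient of $\mu_{2^\infty}$ contributes the $\delta(K)$ and $\delta(F)$ terms. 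This is a correctable slip, not a structural gap, but as written your justification of the $\delta$ term would not go through.
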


\noindent Let us close this section with the following important lemma.
% \begin{proposition}\label{lm  proved by of katharina}
%	Let $K/\mathbb{Q}$ be an abelian  extension such that $h(K)$ is odd,   the
%	$p$-class group of $K_1$ is cyclic and $\lambda_p(K)=1$. If   every prime  ramified in $K_\infty/K$ is totally ramified,  then the $p$-class group of $K_n$ is cyclic for all $n\geq 1$.
%\end{proposition}
%\begin{proof}
% For $A_n$,  the $2$-class group of $K_n$, we know that  $\displaystyle X=\lim_{\longleftarrow}A_n$, then there exists  a submodule $Y\subset X$ such that $A_n=X/\nu_{n, 0}Y$. As $A_0$ is trivial we see that $X=Y$. Let $E$ be the elementary $\Lambda$-module associated to $X$. As $E$ is isomorphic to $\ZZ_p$ we can assume that $X\to E$ is surjective. The kernel is by definition the maximal finite submodule $F$ of $X$.
%	As $A_1$ is cyclic and maps surjectively to $E/\nu_{1, 0}E$ we see that $F\subset \nu_{1, 0}X$. Let $a=\nu_{1, 0}b\in F$ for some $b\notin F$.
%	Let $e$ be the image of $b$ in $E$. Then $\nu_{1, 0}e=0$. Hence $\nu_{1, 0}$ lies in the annihilator of $E$. As $K_{\infty}/K$ is totally ramified the annihilator of $E$ is coprime to all $\nu_{n, 0}$ and in particular to $\nu_{1, 0}$ (cf. \cite[p. 283]{washington1997introduction}). It follows that there is no finite submodule and $X$ is in fact isomorphic to $E$. In particular,  it is cyclic as $\ZZ_p$-module and hence $A_n$ is cyclic for all $n$.	
%\end{proof}

  \begin{lemma}\label{lm C(F_n) is cycli}
 	Let $q_1\equiv q_2 \equiv 3\pmod  8$ be two primes and $n\geq 1$. Let $F_n=\mathbb{Q}( \zeta_{2^{n+2}},  \sqrt{q_1},   \sqrt{q_2})$. Then we have
 	%Then,  for all $n\geq 0$,  the $2$-class group of $F_n$ is cyclic non trivial.
 	\begin{enumerate}[\rm 1.] 	
 		\item   $\lambda(F)=1$,  where $F= \mathbb{Q}(\sqrt{q_1},   \sqrt{q_2},   i)$.
 		\item The class numbers of the layers of the $\mathbb{Z}_2$-extension of $F^+= \mathbb{Q}(\sqrt{q_1},   \sqrt{q_2})$ are odd.
 		\item For all $n\geq 1$,  the $2$-class group of $F_n$ is cyclic non trivial.
 	\end{enumerate}
 \end{lemma}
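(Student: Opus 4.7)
I will prove the three assertions in the order (2), (1), (3), since (1) rests on the vanishing of the plus part of $\lambda(F)$ established in (2), and (3) builds on both.

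For (2), the plan is to apply Fukuda's lemma (Lemma \ref{lm fukuda}) to the cyclotomic $\ZZ_2$-extension $F^+_\infty / F^+$. Only the primes above $2$ ramify in this tower and, by the standard local analysis (the $2$-adic completion of $F^+$ is a ramified quadratic extension of $\QQ_2$ that contains no nontrivial $2$-power roots of unity), these primes are totally ramified from the base layer, so the hypothesis of Fukuda holds at $n_0 = 0$. It then suffices to display two consecutive layers with trivial $2$-class group: Lemma \ref{lm class number of the 1st step is odd}(1) provides $h_2(F^+) = 1$, and Proposition \ref{lemma on units } applied to $\LL = F^+_1 = \QQ(\sqrt{2}, \sqrt{q_1}, \sqrt{q_2})$ yields $h_2(F^+_1) = 1$. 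Fukuda then propagates the vanishing of $\mathrm{rank}_2\,\mathrm{Cl}(F^+_n)$ to every $n \geq 0$, proving (2).

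For (1), since $F$ is CM with maximal real subfield $F^+$, one has $\lambda(F) = \lambda^+(F) + \lambda^-(F)$; part (2) forces $\lambda^+(F) = \lambda(F^+) = 0$. It remains to compute $\lambda^-(F)$ via Kida's formula (Lemma \ref{Kida}) applied to the quartic extension $F/\QQ(i)$: here $[F_\infty : \QQ(i)_\infty] = 4$, $\lambda^-(\QQ(i)) = 0$, and $\delta(F) = \delta(\QQ(i)) = 1$. The only finite primes coprime to $2$ ramified in $F_\infty/\QQ(i)_\infty$ lie above $q_1$ and $q_2$, each with $e = 2$. By Lemma \ref{lm deco to 2}, each $q_j$ splits into two primes in $\QQ(i)_n$ with residue field $\mathbb{F}_{q_j^{2^n}}$, while it remains inert in $\QQ^+_n$ with the same residue field. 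Combined with the elementary observation that every element of $\mathbb{F}_{q_j}$ is a square in $\mathbb{F}_{q_j^2}$ (and hence in $\mathbb{F}_{q_j^{2^n}}$ for $n \geq 1$), each prime of $\QQ(i)_\infty$ over $q_j$ splits into two primes in $F_\infty$, giving four primes of $F_\infty$ above $q_j$; analogously, $F^+_\infty$ contains two primes above each $q_j$. Substituting into Kida:
\[
\lambda^-(F) - 1 = 4\,(0 - 1) + (4 + 4) - (2 + 2) = 0,
\]
hence $\lambda(F) = \lambda^-(F) = 1$.

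For (3), part (2) gives $\mathrm{Cl}_2(F^+_n) = 1$ for every $n$, and together with $\lambda(F) = 1$ and $\mu(F) = 0$ (Ferrero--Washington), Iwasawa's structure theorem yields $|\mathrm{Cl}_2(F_n)| = 2^{n + \nu}$ for $n$ sufficiently large, so $\mathrm{Cl}_2(F_n)$ is non-trivial for all such $n$. To obtain cyclicity for every $n \geq 1$, I plan to compute $\mathrm{rank}_2\,\mathrm{Cl}_2(F_1)$ via Chevalley's ambiguous class number formula for the quadratic extension $F_1 = F(\sqrt{2})/F$, in which only the $2$-adic primes of $F$ ramify (there are exactly two such primes, since $q_1 q_2 \equiv 1 \pmod 8$ forces $\sqrt{q_1 q_2} \in \QQ_2$ and hence the local completion of $F$ at $2$ is of degree $4$ over $\QQ_2$). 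A fundamental system of units of $F$ is accessible through Lemma \ref{Lemme azizi} applied to $F = F^+(i)$, yielding the unit-norm index $[E_F : E_F \cap N_{F_1/F}F_1^*]$. The computation should yield $\mathrm{rank}_2\,\mathrm{Cl}_2(F_1) = 1$, and the analogous argument at level $2$ combined with Fukuda's lemma forces this rank to remain $1$ for every $n \geq 1$, giving cyclicity.

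The main obstacle is the unit-norm index in (3), which requires a careful tracking of which units of $F$ are local $2$-adic norms from $F_1$; this rests on the explicit fundamental system of units of $F$ together with a delicate analysis at the primes above $2$. The prime counting in Kida's formula for (1) is also non-trivial, but it is resolved by the residue-field observation above, which eliminates any dependence on $\left(\dfrac{q_1}{q_2}\right)$ — a subtlety one might otherwise expect to enter via quadratic reciprocity.
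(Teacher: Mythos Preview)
Your treatment of (2) matches the paper. For (1) you take a different but valid route: you apply Kida to the quartic extension $F/\QQ(i)$, whereas the paper uses the quadratic extension $F/\QQ(\sqrt{q_1},i)$ after first reducing WLOG to $\bigl(\tfrac{q_1}{q_2}\bigr)=1$. Your residue-field observation (every element of $\mathbb{F}_{q_j}$ is a square in $\mathbb{F}_{q_j^2}$) is precisely what is needed to count the four primes of $F_\infty$ and the two primes of $F^+_\infty$ above each $q_j$, and it has the pleasant side-effect of making the WLOG reduction unnecessary. Both computations yield $\lambda^-(F)=1$.

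The gap is in (3). You propose to pin down $\mathrm{rank}_2\,\mathrm{Cl}_2(F_1)=1$ by computing the unit-norm index $[E_F:E_F\cap N_{F_1/F}F_1^*]$ directly, but you never carry this out, and the local-norm analysis at the two $2$-adic primes of $F$ against the explicit basis $\{\sqrt{\varepsilon_{q_1}\varepsilon_{q_2}},\sqrt{\varepsilon_{q_1q_2}},\sqrt{i\varepsilon_{q_1}}\}$ is genuinely delicate. The paper avoids this computation entirely: from the ambiguous class formula one already has $\mathrm{rank}_2\,\mathrm{Cl}_2(F_1)=1-e\le 1$, and the possibility $e=1$ (i.e.\ $A_1=0$) is ruled out by your own part~(1), since $A_1=0$ together with Fukuda would force $A_n=0$ for all $n$ and hence $\lambda(F)=0$, contradicting $\lambda^-(F)=1$. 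Thus cyclicity and non-triviality at level~$1$ come for free once $\lambda^-$ is known; no unit-norm computation is required.

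Your plan to reach all $n\ge 1$ by repeating the argument at level~$2$ and then invoking Fukuda would require the unit group of $F_1$ and a second norm-index computation, which is substantially harder than the first. The paper instead works with the Iwasawa module $A_\infty=\varprojlim A_n$: since $h_2(F_n^+)$ is odd and the primes above $2$ are unramified in $F_n/F_n^+$, one has $A_n^+=0$ and hence $A_\infty=A_\infty^-$; it is a standard fact for CM towers that $A_\infty^-$ has no non-zero finite $\Lambda$-submodule, so with $\mu=0$ and $\lambda=1$ one obtains $A_\infty\cong\ZZ_2$ as a $\ZZ_2$-module. Every $A_n$ is then a cyclic quotient, and non-triviality for $n\ge 1$ follows from the surjectivity of the norm map $A_n\to A_1$.
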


 \begin{proof}
 	Without loss of generality we may assume that $ q_1$ and $ q_2$ satisfy the   condition $\left(\dfrac{	q_1}{	q_2}\right)=1$.
 	Set $k=\mathbb{Q}(\sqrt{q_1}, i)$. Let us firstly prove that the $2$-class group of $F_1$ is cyclic non trivial. It is easy to see that there      are exactly two primes of $F$ that ramify in  $F_1$.
 	Since, by Lemma \ref{lm class number of the 1st step is odd}, the class number of $F$ is odd, so
 	by  ambiguous class number formula (cf. \cite{Gr}) $rank(\mathrm{Cl}_2(F_1))=2-1-e=1-e$,  where
 	$e$ is defined by $[E_{F}: E_{F}\cap N_{F_1/F}(F_1)]=2^e$. Then the $2$-class group of $F_1$ is trivial or  cyclic non-trivial.
 	
 	Since $\left(\dfrac{q_1}{q_2}\right)=1$,  then $q_2$ decomposes into the product of $2$ primes of $\mathbb{Q}(\sqrt{q_1})$. So by Lemma \ref{lm deco to 2},  $q_2$ decomposes
 	into the product of $4$ primes in $k_n(\sqrt{q_1})$ while  it decomposes into $2$ primes in  $k_n^+(\sqrt{q_1})$. Since
 	$[F_{\infty}:k_{\infty}]=2$,  then by Lemma  \ref{Kida},  we have
 	\begin{center}$\lambda^-(F)-1=2\cdot\left(0-1\right)+4-2,$ and thus  $\lambda^-(F)=1$.\end{center}
 	
 	Assume that the $2$-class group of $F_1$ is trivial. Note that the layers of the cyclotomic  $\mathbb{Z}_2$-extension of $F$ are given by
 	$	F_n=\mathbb{Q}( \zeta_{2^{n+2}},  \sqrt{q_1},   \sqrt{q_2})$.
 	Then by Lemma \ref{lm fukuda},    the $2$-class group of $F_n$ is trivial for all $n\geq1$. Thus $\lambda(F)=0$,  which contradicts the fact that   $\lambda^-(F)=1$. Thus the $2$-class group of $F_1$
 	  is cyclic non-trivial. By Lemma   \ref{lm class number of the 1st step is odd}  and  Proposition \ref{lemma on units },  the class numbers of $F^+$ and $F_1^+$ are odd,
 	   so by Lemma \ref{lm fukuda} the class number  of $	F_n^+$ is odd for all $n\geq 1$. Therefore  $\lambda(F)^+=0$ and  $\lambda(F)=\lambda(F)^-=1$.  We
 	   adopt the definitions of $A_n^+$ and   $A_n^-$ given in \cite{Katharina}. Note that  there is no finite submodule in $A_\infty^-$ (cf. \cite[Theorem 2.5]{Katharina}).
 	   
 	  Since $h(F_n^+)$ is odd for all $n$, then by the definition of $A_n^+$ is trivial or contains only the primes above $2$ that are ramified in $F_n/F_n^+$. Note that index of ramification of 
 	  $2$ in $F_1$ (resp. $F_1^+$) equals  $4$. Therefore the primes above $2$ are unramified in $F_1/F_1^+$ and so they are also unramified in $F_n/F_n^+$, for all $n$. Then  
 	   $A_n^+$  is trivial. Hence there  is no  finite part in $A_\infty $. Thus  $A_n $ is cyclic for all $n\geq 1$.

 \end{proof}
 \section{\textbf{Proof of the main Theorem}}
 Let us quote the last ingredient  needed to prove  the main theorem.

\begin{lemma}[\text{\cite{d.hilbet}}]\label{lm hilbert}
	Let $K/k$ be a quadratic extension and $\mu\in k$ prime to $2$ such that $K=k(\sqrt{\mu})$. The extension $K/k$ is unramified at  finite primes if and only if $\mu$ verify the following properties:
	\begin{enumerate}[\rm 1.]
		\item The principal ideal generated by $\mu$ is a square of a fractional ideal of $k$.
		\item There exists $ \xi\in k$ such that $\mu\equiv  \xi^2\pmod 4$.
	\end{enumerate}
\end{lemma}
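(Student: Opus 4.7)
The plan is to reduce global unramifiedness to a local statement at each finite prime $\mathfrak{p}$ of $k$ (and its prime $\mathfrak{P}$ above in $K$), and then verify the two cases separately depending on whether the residue characteristic of $\mathfrak{p}$ is odd or equal to $2$. At every step the basic tool is that $k(\sqrt{\mu})=k(\sqrt{\mu\alpha^2})$ for any $\alpha\in k^*$, which lets us modify $\mu$ by squares when convenient.

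At a prime $\mathfrak{p}$ of odd residue characteristic, write $\mu=\pi^{v_{\mathfrak{p}}(\mu)}u$ with $\pi$ a uniformizer at $\mathfrak{p}$ and $u$ a unit; after a square correction we may assume the exponent is $0$ or $1$. One then checks that $k_{\mathfrak{p}}(\sqrt{\mu})/k_{\mathfrak{p}}$ is unramified exactly when $v_{\mathfrak{p}}(\mu)$ is even (the extension is then either trivial or the tame unramified quadratic extension), and tamely totally ramified otherwise. Hence unramifiedness at all odd primes is equivalent to $v_{\mathfrak{p}}(\mu)\equiv 0\pmod 2$ for every odd $\mathfrak{p}$; combined with the hypothesis that $\mu$ is prime to $2$, which forces $v_{\mathfrak{p}}(\mu)=0$ whenever $\mathfrak{p}\mid 2$, this is precisely the statement that $(\mu)$ is the square of a fractional ideal of $k$, i.e.\ condition~$1$.

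At primes $\mathfrak{p}\mid 2$ the key device is the element $\theta=(\xi+\sqrt{\mu})/2$. If $\mu\equiv\xi^2\pmod 4$, write $\mu=\xi^2+4\eta$ with $\eta\in\mathcal{O}_k$; then $\theta$ satisfies the integral monic relation $x^2-\xi x-\eta=0$ whose discriminant equals $\mu$, a unit at $\mathfrak{p}\mid 2$. Thus $\mathcal{O}_{k_{\mathfrak{p}}}[\theta]$ is the full ring of integers of $K_{\mathfrak{P}}$, the different is trivial, and $K_{\mathfrak{P}}/k_{\mathfrak{p}}$ is unramified. Conversely, if $K_{\mathfrak{P}}/k_{\mathfrak{p}}$ is unramified, its ring of integers has the form $\mathcal{O}_{k_{\mathfrak{p}}}[\theta]$ for some $\theta$ satisfying a monic quadratic $\theta^2+a\theta+b=0$ with unit discriminant $a^2-4b$; then $\mu$ and $a^2-4b$ differ by a square in $k_{\mathfrak{p}}^*$, whence $\mu\equiv a^2\pmod 4$ locally at $\mathfrak{p}$. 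Finally, the Chinese Remainder Theorem applied to $\mathcal{O}_k/4\mathcal{O}_k$ packages these finitely many local congruences at primes above $2$ into a single global $\xi\in\mathcal{O}_k$ with $\mu\equiv\xi^2\pmod 4$, producing condition~$2$.

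The main obstacle is the wild case $\mathfrak{p}\mid 2$: the odd-prime step is just tame ramification theory and the CRT patching is purely formal, so what does the work is the elementary but wild discriminant computation via $(\xi+\sqrt{\mu})/2$, together with the standard local fact that an extension of local fields is unramified if and only if its different (equivalently, its discriminant) is trivial.
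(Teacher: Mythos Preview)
Your argument is correct. The local analysis at odd primes via the parity of $v_{\mathfrak p}(\mu)$ is the standard tame picture, and at primes above $2$ the trick of passing to the integral generator $\theta=(\xi+\sqrt{\mu})/2$ with minimal polynomial $x^2-\xi x-\eta$ of discriminant $\mu$ (a local unit) cleanly identifies unramifiedness with the congruence $\mu\equiv\xi^2\pmod 4$; the CRT patching from the local congruences at the finitely many $\mathfrak p\mid 2$ to a global $\xi\in\mathcal O_k$ is legitimate because $\mathcal O_k/4\mathcal O_k\cong\prod_{\mathfrak p\mid 2}\mathcal O_{k_{\mathfrak p}}/4\mathcal O_{k_{\mathfrak p}}$. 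One small point worth making explicit in the converse at $\mathfrak p\mid 2$: when $\mu$ is already a square in $k_{\mathfrak p}^*$ (so the local extension is trivial), take $a$ to be that square root---it is a unit since $\mu$ is---and the congruence $\mu\equiv a^2\pmod 4$ is immediate.

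As for comparison: the paper does not give its own proof of this lemma. It is quoted directly from Hilbert's 1898 paper on relative quadratic fields and used as a black box, so there is no in-paper argument to set yours against. Your local-to-global proof via discriminants is exactly the classical one and would serve as a self-contained replacement for the citation.
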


 \begin{proof}[\textbf{Proof of Theorem \ref{thm the main theorem} }]
 		We know, by \cite{chemskatharina},  that the $2$-class group of $K_{n}=K_{n, d}$ is isomorphic to
 		$ \mathbb{Z}/2\mathbb{Z}\times\mathbb{Z}/2^{n+m-2}\mathbb{Z} $, whenever $d$ takes one of the forms appearing in Theorem \ref{thm the main theorem}. Let us start by proving the assertions of the first item.
 	\begin{enumerate}[\rm 1.]	\item   Note that $F_n$ is the genus field of $K_{n, d}$,  $[F_n:K_{n, d}]=2$ and its  $2$-class group  is cyclic,  then by class field theory the first Hilbert $2$-class field $F_n^{(1)}$ of $F_n$ coincides with  the second Hilbert $2$-class field $K_{n, d}^{(2)}$ of $K_{n, d}$.
 	
 	 To this end, we claim that  $ K_{n, d}^{(1)}=F_n^{(1)}=K_{n, d}^{(2)}$. To prove this  it suffices then to verify  that $h_2(F_n)=\frac{h_2(K_{n,d})}{2}=2^{n+m-2}.$
 	
 	   Note that by Lemma \ref{lm C(F_n) is cycli},  $A_n(F_n)$, the $2$-class group of $F_n$ is cyclic and that $\lambda=1$.
 	  Therefore, the $\Lambda$-module $A_{\infty}=\displaystyle\lim_{\longleftarrow}(A_n(F_n))$  is   isomorphic to its elementary module. Since every prime which ramify in $F_\infty/F$ is totally ramified, then according to \cite[pp. 282-283]{washington1997introduction}, one gets  $\nu_{n,0}A_{\infty}=2\nu_{n-1,0}A_{\infty}$ for all $n\ge1$.
 	  Hence, $$|A_n|=\vert A_{\infty}/\nu_{n,0}A_{\infty}\vert=\vert A_{\infty}/2^{n-1}A_{\infty}\vert \vert A_{\infty}/\nu_{1,0}A_{\infty}\vert=2^{n+s},$$
 	 for $n\geq 1$ and some constant $s\ge 0$. Hence the Iwasawa formula holds for all  $n\geq 1$, from which we infer that  $h_2(F_1)=2^{1+s}$.
 	 It follows,
 	  by Lemmas  \ref{wada's f.} and \ref{class numbers of quadratic field} that \\
 	 $\begin{array}{ll}
 	 h_2(F_1)&=\frac{1}{2^{16}}q(F_1) h_2(q_1) h_2(-q_1) h_2(q_2) h_2(-q_2) h_2(2q_1)h_2(-2q_1) h_2(2q_2)\\
 	 &\qquad h_2(-2q_2) h_2(q_1q_2) h_2(-q_1q_2) h_2(2q_1q_2) h_2(-2q_1q_2) h_2(2) h_2(-2)h_2(-1) \\
 	 &=\;\frac{1}{2^{16}}\cdot q(F_1)\cdot   1 \cdot 1 \cdot 1 \cdot 1 \cdot 1\cdot  2 \cdot1\cdot 2\cdot 1\cdot 4 \cdot2\cdot h_2(-2q_1q_2)\cdot 1 \cdot 1 \cdot 1, \\
 	 &= \frac{1}{2^{11}}q(F_1)h_2(-2q_1q_2).
 	 \end{array}$\\
 	We have to compute $q(F_1)$, i.e., $Q_{F_1}$. Note that $Q_{F_1}$ must be $2$, for if  $Q_{F_1}=1$, then by Proposition \ref{lemma on units }, $q(F_1)=2^9$. So $h_2(F_1)=\frac{1}{4}h_2(-2q_1q_2)$.
 	  Considering the field  $K=\mathbb{Q}(\sqrt{-q_1}, \sqrt{q_2}, \sqrt{2})$,
 	 since $F_1/K$ is an unramified quadratic extension, then $h_2(F_1)\geq \frac{1}{2}h_2(K) $, and thus  by Lemma \ref{lm class number of the 1st step is odd}, we get
 	 $h_2(F_1)\geq  \frac{1}{2}h_2(-2q_1q_2)$. So we have a contradiction.  Hence $Q_{F_1}=2$,  and  so by Lemma  \ref{Lemme azizi} and Proposition \ref{lemma on units },  $q(F_1)=2^{10}$.
 	 	Therefore, $h_2(F_1)=\frac{h_2(K_{1,d})}{2}=2^{1+m-2}=2^{1+s}$, this in turn implies, using Iwasawa formula, that  $h_2(F_n)=2^{n+m-2}=\frac{h_2(K_{n,d})}{2}$. 
 	 	Hence  $ K_{n, d}^{(1)} =K_{n, d}^{(2)}$. Which gives the first item of the main theorem.
 \item Now assume that we are in the conditions of the second item of the main theorem.
  Put $K_n'=\mathbb Q{(\zeta_{2^{n+2}})}$,  $p=a^2+16b^2=e^2-32f^2$ and $\pi_1'=a+4bi$, $\pi_1''=a-4bi$,   $\pi_2'= e+4f\sqrt{2}$ and $\pi_2''= e-4f\sqrt{2}$.

  So $p=\pi_r'\pi_r''$, for $r\in\{1, 2\}$.   Note   that by \cite[Proposition 1]{chems}, $p$ splits into $4$ ideals of $K_n'$.  Thus, there are exactly  $2$ primes of
  $K_n'$ above $\pi_r'$ (resp. $\pi_r''$).
  Note also that by \cite[Lemma 6]{chems},  we have the following   norm residue symbols
  $\left( \frac{\zeta_{2^{n+2}},p}{\mathfrak p_{K_{n}'}}\right)=-1=\left( \frac{\zeta_{2^{n+2}},\pi_r'\pi_r''}{\mathfrak p_{K_n'}}\right)$. So choose $\pi_r\in\{\pi_r',\pi_r''\}$ such that
  $\left( \frac{\zeta_{2^{n+2}},\pi_r}{\mathfrak p_{K_n'}}\right)=-1 $, where $\mathfrak p_{K_n'}$ is a prime ideal of $K_n'$ above $\pi_r$.

   Since the primes  of $K_n'$ that ramify    in $K_{n,p}$ are exactly the divisors  of  $p$ in $K_n'$, then the ideals of $K_{n,p}$ generated by $\pi_1$ and $\pi_2$ are squares of ideals of $K_{n,p}$. As $a\equiv e \equiv \pm 1\equiv i^2\pmod 4$, since they are odd. It follows that the equations $\pi_r\equiv \xi^2\pmod 4$, $r=1$ or $2$, have   solutions.
   Therefore, By Lemma  \ref{lm hilbert},	$L_{n,1}=K_{n,p}(\sqrt{\pi_1})$ and  $L_{n,2}=K_{n,p}(\sqrt{\pi_2})$  are two distinct unramified  quadratic extensions of $K_{n,p}$.

   Let us now show that the $2$-class groups of $K_1$ and $K_2$ are cyclic. Put   $k_{n,r}=\mathbb{Q}(\zeta_{2^{n+2}},\sqrt{\pi_r })$.
    By the   ambiguous class  number formula (cf. \text{\cite{Gr}}) applied on the extension $k_{n,r}/K_n'$, we have
   $rank(\mathrm{Cl}_2(k_{n,r}))=2-1-e=1-e$. Since $\left( \frac{\zeta_{2^{n+2}},\pi_r}{\mathfrak p_{K_n'}}\right)=-1 $, then $e\not=0$. Hence
   $rank(\mathrm{Cl}_2(k_{n,r}))=0$. So the class number of $k_{n,r}$ is odd. Then again by the   ambiguous class  number formula (cf. \text{\cite{Gr}}) applied on the extension $L_{n,r}/k_{n,r}$,
    we have $rank(\mathrm{Cl}_2(L_{n,i}))=2-1-e=1-e$. Thus $\mathrm{Cl}_2(L_{n,r})$, is either trivial or cyclic non trivial, but since $\mathrm{Cl}_2(K_{n,p})$ is of type $(2,2^{2^{n+m-2}})$ and $L_{n,r}/K_{n,p}$ is an unramified quadratic extension,
    we infer that
    $\mathrm{Cl}_2(L_{n,r})$, can not be trivial. Hence $\mathrm{Cl}_2(L_{n,r})$ is cyclic. It follows that the second $2$-class group of $K_{n,p}$ is abelian or modular (cf. \cite[Theorem 12.5.1]{HallM}).

 Assume now that $n=1$, we use the fact that   $K_{1, p}$ is the genus field of $\mathbb{Q}(\sqrt{2p}, i)$. As, by \cite[Théorème 5]{taous2008},  the second $2$-class group of $\mathbb{Q}(\sqrt{2p}, i)$ is abelian, then by class field theory
 	the second $2$-class group of $K_{1, p}$ is abelian too. Which  completes the proof of the main Theorem.
 	\end{enumerate} \end{proof}

 \begin{remark}
Keep the notations of the second part of the above proof. Then 	$L_{n,r}$, with $r=1$ or $2$, are two unramified quadratic extensions of 	$K_{n,p}$, for which the $2$-class groups are
cyclic.
 \end{remark}
 \begin{remark}Let  $d$ be in one of the forms appearing in Theorem \ref{thm the main theorem}.
 Since by \cite[Proposition 2]{mccall1995imaginary}, the $2$-class group of $K_{0,d}:=K=\mathbb{Q}(\sqrt{d}, i)$ is cyclic, then the $2$-class field tower of $K_{0,d}$ terminates at the first step.
\end{remark}

\noindent Now we treat the capitulation of the $2$-classes of each $K_n$ and the orders of the three unramified
 	quadratic extensions of $K_n$. We need  the following lemma:
 \begin{lemma}[\cite{benlemsne}]\label{lm benjLS}
 	Let $k$ be a number field such that $\mathrm{Cl}_2(k)\simeq (2^m, 2^n)$ for some positive integers $m$ and $n$. If there is an unramified
 	quadratic extension of $k$  with $2$-class number $2^{m+n-1}$,  then all three unramified quadratic extensions of $k$ have $2$-class number
 	$2^{m+n-1}$,  and the $2$-class field tower of $k$ terminates at $k^{(1)}$.
 	
 	Conversely, if the $2$-class field tower of $k$ terminates at $k^{(1)}$, then all three
 	unramified quadratic extensions of $k$ have $2$-class number $2^{m+n-1}$.
 \end{lemma}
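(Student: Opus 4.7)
The plan is to translate the statement into group theory via $G = \mathrm{Gal}(k^{(2)}/k)$ and reduce it to a structural fact about $2$-generated finite $2$-groups. By class field theory $G/G' \simeq \mathrm{Cl}_2(k) \simeq (2^m,2^n)$ has rank two, so $G$ is minimally $2$-generated and has exactly three maximal subgroups $H_1,H_2,H_3$, all containing $\Phi(G)=G'G^2$. The Galois correspondence pairs these with the three unramified quadratic extensions $K_1,K_2,K_3$. Since $K_i\subseteq k^{(1)}$, the extension $K_i^{(1)}/k^{(1)}$ is unramified abelian, so $K_i^{(1)}\subseteq k^{(2)}$ and $K_i^{(1)}$ is the maximal abelian intermediate field of $k^{(2)}/K_i$; hence $\mathrm{Cl}_2(K_i)\simeq H_i/H_i'$. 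Using $|H_i|=|G|/2=2^{m+n-1}|G'|$ I get
\[
h_2(K_i)=|H_i/H_i'|=2^{m+n-1}\cdot\frac{|G'|}{|H_i'|},
\]
so the hypothesis $h_2(K_i)=2^{m+n-1}$ is equivalent to $H_i'=G'$ (the inclusion $H_i'\subseteq G'$ being automatic).

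The heart of the argument is then the group-theoretic claim that \emph{if a maximal subgroup $H$ of a $2$-generated finite $2$-group $G$ satisfies $H'=G'$, then $G$ is abelian}. I would pick generators $x,y$ of $G$ with $H=\langle x,\Phi(G)\rangle$ and pass to the class-$2$ quotient $\bar G:=G/\gamma_3(G)$, in which $\bar G'$ is central and the commutator map is bilinear. A short computation gives $\bar H=\langle\bar x,\bar y^2,[\bar x,\bar y]\rangle$ and
\[
\bar H'=\langle [\bar x,\bar y^2]\rangle=\langle[\bar x,\bar y]^2\rangle,\qquad \bar G'=\langle[\bar x,\bar y]\rangle.
\]
The equality $H'=G'$ forces $\langle [\bar x,\bar y]^2\rangle=\langle[\bar x,\bar y]\rangle$ inside the finite $2$-group $\bar G'$, which can only hold if $[\bar x,\bar y]=1$ in $\bar G$, i.e.\ $G'=\gamma_3(G)=[G',G]$. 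By the strict descent of the lower central series in a nontrivial nilpotent group, this forces $G'=1$.

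With $G$ abelian every $H_i'=1$, so $h_2(K_i)=|H_i|=2^{m+n-1}$ for all three $i$ and $k^{(2)}=k^{(1)}$; the tower terminates at the first step. The converse is immediate by the same dictionary: if the tower terminates, $G$ is abelian of order $2^{m+n}$, each $H_i$ has index $2$ with trivial commutator, and $h_2(K_i)=2^{m+n-1}$. The main obstacle is the group-theoretic step: the delicacy is that $\bar H'$ is exactly the squared image of $\bar G'$ in the class-$2$ quotient, so the single equality $H_i'=G'$ is already strong enough to collapse $G'$ entirely.
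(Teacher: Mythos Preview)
The paper does not supply its own proof of this lemma; it is quoted as a cited result from \cite{benlemsne}. Your argument is correct and follows exactly the group-theoretic translation that underlies the original source: the identification $h_2(K_i)=|H_i/H_i'|$ via $K_i^{(1)}\subseteq k^{(2)}$ is standard, and your proof that $H'=G'$ for a maximal subgroup of a $2$-generated finite $2$-group forces $G$ abelian---by passing to the class-$2$ quotient where $\bar H'=\langle[\bar x,\bar y]^2\rangle$ and then invoking strict descent of the lower central series---is clean and complete.
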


\noindent By the main Theorem,   Lemma \ref{lm benjLS} and \cite{bensney} (or \cite{benlem06}), one easily deduces the following corollary.
\begin{corollary}\label{cor first cor}
	Let $d$ be a positive  square-free integer and $m$ such that $h_2(-2d)=2^m$.
		\begin{enumerate}[\rm 1.]
		\item If $d=q_1q_1$,  where $q_i\equiv3 \pmod{8}$ are two distinct primes,  	then for all $n\geq 1$
		the class numbers of the three unramified quadratic extensions of $K_{n, d}$ equal  $h_2(F_n)=2^{n+m-2} $.
		Furthermore,   there are exactly $4$ classes of  the $2$-class group  of $K_{n, d}$ capitulating  in each unramified quadratic  extension of $K_{n, d}$.
\item Assume $d=p\equiv9\pmod{16}$ is a prime such that   $(\frac{2}{p})_4=1$.
 \begin{enumerate}[\rm a.] \item In the abelian case, for all $n\geq 1$,
		the class numbers of the three unramified quadratic extensions of $K_{n, d}$ equal  $2^{n+m-2} $, and
		 there are exactly $4$ classes of  the $2$-class group  of $K_{n, d}$ capitulating  in each unramified quadratic  extension of 	$K_{n, d}$.
		 \item In the modular case and  	for $n\geq 2$,
		    there are only $2$  classes of  the $2$-class group of $K_{n, d}$  capitulating in each  unramified quadratic extensions of $K_{n, d}$.
		\end{enumerate}
	\end{enumerate}
\end{corollary}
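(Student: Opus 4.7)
The plan is to read off the corollary directly from the main theorem by combining it with Lemma \ref{lm benjLS} and the capitulation classification of \cite{bensney} (and \cite{benlem06}) for number fields whose $2$-class group has type $(2,2^{n+m-2})$. No further arithmetic on $K_{n,d}$ is needed; every ingredient has already been prepared in the previous sections.

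First, I would unpack the main theorem case by case. For item (1), the main theorem says $\mathcal{L}(K_{n,d}) = K_{n,d}^{(1)}$ and $G_n \simeq \mathbb{Z}/2\mathbb{Z} \times \mathbb{Z}/2^{n+m-2}\mathbb{Z}$, so the $2$-tower terminates at the first step. Apply the converse direction of Lemma \ref{lm benjLS} with the lemma's $(m,n)$ equal to $(1,n+m-2)$: all three unramified quadratic extensions of $K_{n,d}$ have $2$-class number $2^{1+(n+m-2)-1} = 2^{n+m-2}$. One of these three extensions is the genus field $F_n$, whose class number was already computed to be $2^{n+m-2}$ inside the proof of the main theorem, so everything is consistent. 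Item (2a) is handled identically using the abelian case of the main theorem.

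For item (2b), the main theorem yields that $G_n$ is modular with $\mathcal{L}(K_{n,d}) = K_{n,d}^{(2)}$, so Lemma \ref{lm benjLS} no longer forces all three unramified quadratic extensions to have the same $2$-class number; instead, the capitulation numbers are read from the classification of capitulation in fields with $2$-class group $(2,2^{n+m-2})$ whose second $2$-class group is modular, given in \cite{bensney} (or \cite{benlem06}). That classification yields exactly $2$ capitulating classes in each unramified quadratic extension (this is the signature feature that distinguishes the modular case from the abelian case, where the count is $4$). For the capitulation counts in the abelian cases (1) and (2a), one likewise invokes \cite{bensney}, \cite{benlem06}: for an abelian $G_n$ of type $(2,2^{n+m-2})$ the three unramified quadratic subextensions of $K_{n,d}^{(1)}/K_{n,d}$ each have exactly $4$ capitulating classes, as a direct consequence of Hilbert's Theorem 94 together with the cyclic structure of $\mathrm{Cl}_2$ of each of them (which is $2^{n+m-2}$, matching the order of the capitulation kernel $[K_{n,d}^{(1)}:L]\cdot 2 = 4$).

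The hardest bookkeeping step is merely to match the indexing conventions of the cited capitulation tables to our notation (the lemma's $(m,n)$ versus our $(1,n+m-2)$, and identifying which group-theoretic ``type'' of second $2$-class group corresponds to the abelian versus modular case of the main theorem). There is no genuinely new computation: once the main theorem is granted, each clause of the corollary is either a direct application of Lemma \ref{lm benjLS} or a direct quotation of the capitulation tables in \cite{bensney, benlem06}.
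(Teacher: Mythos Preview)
Your proposal is correct and follows essentially the same approach as the paper, which simply states that the corollary is deduced from the main Theorem, Lemma \ref{lm benjLS}, and \cite{bensney} (or \cite{benlem06}) without further elaboration. Your write-up merely spells out in detail how these three ingredients combine, which is exactly what the paper intends.
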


\begin{corollary}Let $q_1\equiv q_2 \equiv 3\pmod  8$ be two primes. Then we have
	$$\mathrm{Cl}_2(F_n)\simeq  \mathbb{Z}/2^{n+m-2}\mathbb{Z},  $$
	for all $n\geq 1$.
\end{corollary}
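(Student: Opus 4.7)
The plan is to assemble two ingredients already established in the paper. First, from Lemma \ref{lm C(F_n) is cycli}(3), we know that for every $n\ge 1$ the $2$-class group $\mathrm{Cl}_2(F_n)$ is cyclic (and non-trivial). Thus the only remaining issue is to pin down its order.

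Next, I would invoke the computation of $h_2(F_n)$ carried out inside the proof of Theorem \ref{thm the main theorem}, item (1). There it is shown, via Wada's class number formula (Lemma \ref{wada's f.}), the values of the $2$-class numbers of the quadratic subfields (Lemma \ref{class numbers of quadratic field}), the determination $Q_{F_1}=2$, and the iteration argument using $\nu_{n,0} A_\infty = 2\nu_{n-1,0} A_\infty$ coming from the fact that $A_\infty$ equals its elementary $\Lambda$-module (which in turn uses $\lambda(F)=1$ from Lemma \ref{lm C(F_n) is cycli}(1) together with total ramification of ramified primes in $F_\infty/F$), that
\[
h_2(F_n) \;=\; \frac{h_2(K_{n,d})}{2} \;=\; 2^{n+m-2}.
\]

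Combining these two facts is immediate: a cyclic $2$-group of order $2^{n+m-2}$ is isomorphic to $\mathbb{Z}/2^{n+m-2}\mathbb{Z}$, which yields
\[
\mathrm{Cl}_2(F_n) \;\simeq\; \mathbb{Z}/2^{n+m-2}\mathbb{Z}
\]
for every $n\ge 1$. There is no real obstacle in this corollary, since both the cyclicity and the order were the substantive content of the preceding lemmas and of the main theorem's proof; the corollary merely records the combined structural statement. The only thing worth stating explicitly, for completeness, is that the exponent $n+m-2$ is non-negative precisely because $m \ge 2$ (as $h_2(-2q_1q_2) = 4$ by Lemma \ref{class numbers of quadratic field}(4) applied with the assumption $\left(\tfrac{q_1}{q_2}\right)=1$ made in the main theorem's proof, so that $m \ge 2$), so the cyclic group is genuinely well-defined and non-trivial for every $n\ge 1$.
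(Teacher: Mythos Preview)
Your proof is correct and follows precisely the approach implicit in the paper: the corollary is stated there without proof because it is the immediate combination of the cyclicity of $\mathrm{Cl}_2(F_n)$ (Lemma \ref{lm C(F_n) is cycli}(3)) with the order computation $h_2(F_n)=2^{n+m-2}$ established inside the proof of Theorem \ref{thm the main theorem}(1).

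One small slip in your final parenthetical: Lemma \ref{class numbers of quadratic field}(4) gives $h_2(-q_1q_2)=4$, not $h_2(-2q_1q_2)$, so you cannot cite it for $m\ge 2$. The inequality $m\ge 2$ is nonetheless true (e.g.\ by genus theory the $2$-rank of $\mathrm{Cl}(\QQ(\sqrt{-2q_1q_2}))$ is $2$), and in any case it is already forced by the non-triviality of $\mathrm{Cl}_2(F_1)$ together with $h_2(F_1)=2^{m-1}$, so no independent verification is needed.
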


\begin{remark}
	Let $d$ be a square-free integer such that the $2$-class group of $K_{1,d}=\mathbb{Q}(\sqrt{d}, \sqrt{2}, \sqrt{-1})$ is of type
	$(2, 4)$, for the conditions that make the $2$-class group of $K_{1, d}$ of type $(2, 4)$ see \cite{chemsZkhnin2class}. Then by Theorem \ref{thm the main theorem}  the class field tower of $K_{1, d}$ stops at the first step $K_{1, d}^{(1)}$ and   $G=\mathrm{Gal}(K_{1, d}^{(1)}/K_{1,d})$ is abelian.	Therefore, by \cite[page 112]{ATZ-16}, $4$ classes (resp. all the classes) of  the $2$-class group  of $K_{n, d}$ capitulate  in each unramified quadratic (resp. biquadratic) extension of 	$K_{n, d}$.
\end{remark}

   \section*{\textbf{Acknowledgments}}
  The   authors would like to express their gratitude to  Katharina M\"uller   for her  comments on the previous versions of this paper. 
  Particularly the first author would like to thank her for explaining to him many things in Iwasawa Theory.

\end{document}